\newtheorem{theorem}{Theorem}[section]
\newtheorem{proposition}[theorem]{Proposition}
\newtheorem{lemma}[theorem]{Lemma}
\newtheorem{question}[theorem]{Question}
\theoremstyle{definition}
\newtheorem{definition}[theorem]{Definition}
\theoremstyle{remark}
\numberwithin{equation}{section}
\newcommand{\St}{\mathcal{S}}
\begin{document}

\title{The classification of some GK-trisections}

\author{Trent Schirmer}
\address{Department of Mathematics, Oklahoma State University, 
Stillwater, OK 74078}
\email{trent.schirmer@okstate.edu}
\urladdr{www.trentschirmer.com} 

\begin{abstract}
We classify a large class of ``unbalanced'' $4$-manifold GK-trisections, which are a slight generalization of $4$-manifold trisections defined by Gay and Kirby in \cite{Gay-Kirby}. 
\end{abstract}

\maketitle

In the following, we classify a large class of ``unbalanced'' $4$-manifold GK-trisections, which are a slight generalization of $4$-manifold trisections defined by Gay and Kirby in \cite{Gay-Kirby}.  The proof itself only occupies about five pages.  It employs a single technical lemma about Heegaard diagrams together with some famous results on Dehn surgery due to Gabai \cite{Gabai} and Gordon-Luecke \cite{Gordon-Luecke}.  

In Section 1 we give an overview of GK-trisections which emphasizes their parallelism with Heegaard splittings, and motivates the remainder of the paper.  In Section 2 we prove the above mentioned technical lemma about Heegaard diagrams.  In Section 3 we present our classification result, together with some examples of GK-trisections which we believe to be ``non-standard.''

\section{GK-Trisections and Heegaard splittings}

The following brief introduction to GK-trisections emphasizes their similarity to Heegaard splittings.  For a more detailed introduction to GK trisections, with many examples included for illustration, we recommend Gay and Kirby's original paper \cite{Gay-Kirby}.  Any book about $3$-manifolds will have a discussion of Heegaard splittings; we recommend \cite{Scharlemann} for a more sophisticated introduction.  

A manifold $Y$ is said to be {\em properly embedded} in $X$ if it is transverse to $\partial X$ and $Y\cap \partial X=\partial Y$.  A {\em proper isotopy} of $Y$ in $X$ is a homotopy of $Y$ in $X$ through proper embeddings. $N(Y,X)$ denotes a closed regular neighborhood of $Y$ in $X$, $E(Y,X)=\overline{X-N(Y,X)}$ is the {\em exterior} of $Y$ in $X$, and $Fr(Y,X)=N(Y,X)\cap E(Y,X)$ denotes the {\em frontier} of $Y$ in $X$.  We shall often drop the ambient space $X$ from this notation when no confusion can arise.  A {\em genus $g$ $n$-handlebody} is $\natural^gS^1\times D^{n-1}$, unless $g=0$, in which case it is $D^n$.  In other words, it is an $n$-dimensional ball with $g$ oriented $1$-handles attached to it.

\begin{definition}
A genus-$g$ {\em Heegaard splitting} of a closed $3$-manifold $M$ is a triple $(H_1,H_2,\Sigma)$ satisfying the following conditions:

\begin{itemize}
\item $H_1\cup H_2=M.$
\item $H_i$ is a genus $g$ $3$-handlebody, $i=1,2.$
\item $\Sigma=H_1\cap H_2=\partial H_1=\partial H_2$.

\end{itemize}

In this case, $\Sigma$ is said to be a {\em Heegaard surface} of $M$.

\end{definition}

\begin{theorem}

Every closed, orientable, connected $3$-manifold admits a Heegaard splitting.

\end{theorem}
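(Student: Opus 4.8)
The plan is to realize $M$ as the union of two handlebodies glued along their common boundary, using a handle decomposition of $M$. Concretely, I would fix a self-indexing Morse function $f\colon M\to[0,3]$; such a function exists on any closed smooth manifold (and in the topological category one may first invoke Moise's theorem to put a PL/smooth structure on $M$). By the standard handle cancellation and handle sliding moves, the first step is to arrange that $f$ has exactly one critical point of index $0$ and exactly one of index $3$. This is exactly where connectedness of $M$ is used: two distinct local minima lying in the same component can always be connected by a path crossing a single index-$1$ critical point, which can then be cancelled against one of the minima, and dually for maxima; closedness of $M$ guarantees there are finitely many critical points, so this terminates.

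Having done so, set $H_1=f^{-1}([0,3/2])$ and $H_2=f^{-1}([3/2,3])$, so that $H_1\cup H_2=M$ and $\Sigma:=H_1\cap H_2=f^{-1}(3/2)$. Since $3/2$ is a regular value and $f$ is self-indexing, $H_1$ is built from a single $0$-handle together with some number $g$ of $1$-handles; as $M$ is orientable, each $1$-handle is attached compatibly with an orientation, so $H_1\cong\natural^{g}S^1\times D^2$, a genus-$g$ $3$-handlebody. Applying the same reasoning to $3-f$ restricted to $H_2$ turns the (unique) index-$3$ critical point of $f$ into a $0$-handle and the index-$2$ critical points into $1$-handles, so $H_2$ is likewise a handlebody. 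Finally, because $\partial M=\varnothing$ we have $\partial H_1=\Sigma=\partial H_2$, and since the boundary of a genus-$g'$ $3$-handlebody is the closed orientable surface of genus $g'$, the two genera are forced to agree; thus $(H_1,H_2,\Sigma)$ satisfies all three conditions of the definition.

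I expect the main obstacle to be bookkeeping rather than anything deep: one must carefully justify the two normalizations — that $f$ may be taken with a single minimum and single maximum, and that the "upside-down" view of $H_2$ genuinely is a handle decomposition with one $0$-handle and all other handles of index $1$ — since these are precisely the points at which connectedness and closedness of $M$ are invoked. An alternative that sidesteps Morse theory is to take a triangulation $T$ of $M$ (Moise), let $H_1=N(T^{(1)},M)$ be a closed regular neighborhood of the $1$-skeleton and $H_2=E(T^{(1)},M)$ its exterior: a regular neighborhood of a connected graph in an orientable $3$-manifold is a handlebody, $H_2$ deformation retracts onto the dual $1$-skeleton and hence is one too, and $\Sigma=Fr(T^{(1)},M)$ is the Heegaard surface. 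In that route the step requiring care is the identification of $E(T^{(1)},M)$ with a regular neighborhood of the dual graph.
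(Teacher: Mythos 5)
Your argument is correct: this is the standard proof of the classical existence theorem for Heegaard splittings, and both routes you sketch (a self-indexing Morse function with a single minimum and maximum, splitting along the middle level surface; or a triangulation with $H_1$ a regular neighborhood of the $1$-skeleton and $H_2$ the exterior, which retracts to the dual $1$-skeleton) are complete in outline and fill in the gaps you flag in the standard way. Note that the paper itself offers no proof of this statement---it is quoted as classical background (Theorem 1.2) with the reader referred to the literature---so there is no authorial argument to compare against; your write-up is exactly the textbook proof the author is implicitly invoking.
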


One analogue of Heegaard splittings for $4$-manifolds, discovered by Gay and Kirby via their theory of Morse $2$-functions, is that of a {\em GK trisection}.

\begin{definition}

A {\em $(g,k_1,k_2,k_3)$-trisection} of a closed, smooth $4$-manifold $X$ is a quadruple $(V_1,V_2,V_3,\Sigma)$ satisfying the following conditions:

\begin{itemize}

\item $V_1\cup V_2\cup V_3=X$

\item $V_i$ is a genus $k_i$ $4$-handlebody, $1\leq i\leq 3$.

\item $H_{ij}=V_i\cap V_j$ is a genus $g$ $3$-handlebody whenever $i\neq j$.

\item $\Sigma=V_1\cap V_2\cap V_3$ is a closed orientable genus $g$ surface.

\item $\partial V_i=H_{ij}\cup H_{ik}$, whenever $i, j, k$ are distinct.

\end{itemize}

\end{definition}

Although it did not matter in Definition 1.1 and Theorem 1.2, it is essential to state here that in Definition 1.3 we are in the smooth category.  Each of the $4$-dimensional handlebodies $V_i$ has a corner along $\Sigma$, and the central $3$-dimensional object $H_{12}\cup H_{13}\cup H_{23}$ is nicely embedded in the following sense: Each $H_{ij}$ is smoothly embedded, as is $\Sigma$.  Moreover, each $H_{ij}$ intersects $N(\Sigma)\cong \Sigma\times D^2$ in a subset of the form $\Sigma\times R$, where $R\subset D^2$ is ray emanating from the origin of $D^2$.  With these assumptions in place, we have:

\begin{theorem} \cite{Gay-Kirby}
Every closed, orientable, connected, smooth $4$-manifold admits a $(g,k,k,k)$-trisection for some pair of integers $g,k\geq 0$.
\end{theorem}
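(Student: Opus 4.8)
The plan is to prove this combinatorially, starting from a handle decomposition of $X$; Gay and Kirby's original argument instead runs through Morse $2$-functions, and I indicate that route at the end. First I would fix a self-indexing handle decomposition of $X$, which — $X$ being closed and connected — may be taken with a single $0$-handle and a single $4$-handle, together with $k_1$ one-handles, $c$ two-handles, and $k_3$ three-handles. Writing $Z_1$ for the union of the $0$- and $1$-handles and $Z_3$ for the union of the $3$- and $4$-handles, we have $Z_1\cong\natural^{k_1}S^1\times D^3$ and $Z_3\cong\natural^{k_3}S^1\times D^3$, with $\partial Z_1\cong\#^{k_1}S^1\times S^2$ and $\partial Z_3\cong\#^{k_3}S^1\times S^2$, and $X$ is recovered by attaching the two-handles to $Z_1$ along a framed link $L\subset\partial Z_1$ and then gluing on $Z_3$.

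I would then encode the desired trisection by a \emph{trisection diagram}: a closed genus-$g$ surface $\Sigma$ equipped with three cut systems $\alpha,\beta,\gamma$ of $g$ curves each, such that the diagrams $(\Sigma,\alpha,\beta)$, $(\Sigma,\beta,\gamma)$, $(\Sigma,\gamma,\alpha)$ are each genus-$g$ Heegaard diagrams of connected sums of copies of $S^1\times S^2$, and such that the associated gluing recovers $X$. The natural choices are $\alpha=\,$meridians of the $1$-handles of $Z_1$, $\gamma=\,$(dual) meridians of the $3$-handles, and $\beta=\,$attaching circles of the $2$-handles, each enlarged by stabilization curves to size $g$. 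To make sense of these as curves on a common surface I would first isotope $L$ onto a Heegaard surface of $\partial Z_1$, which forces $g$ to be large (any link in a $3$-manifold lies on a high enough genus Heegaard surface), and then try to normalize: using handleslides and further stabilizations, bring all three of the pairs $(\alpha,\beta),(\beta,\gamma),(\gamma,\alpha)$ simultaneously into the standard form of a Heegaard diagram of $\#^{\bullet}S^1\times S^2$ — disjoint parallel pairs together with once-intersecting pairs.

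This simultaneous normalization is the technical heart, and the step I expect to be the main obstacle: standardizing a single pair is routine, but the handleslides and stabilizations that fix one pair generally disturb the others, and one must track framings carefully so that the manifold reconstructed from the diagram is $X$ itself and not merely some $4$-manifold with the same counts of boundary summands. This is exactly the work done by Gay and Kirby's Cerf-theoretic machinery; a direct proof would organize the moves according to the structure of the handle decomposition and exploit the abundance of free genus created by the stabilizations.

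Once the diagram is in normal form, the trisection is read off in the usual way — the $\alpha$-, $\beta$-, and $\gamma$-handlebodies are the three pieces $H_{ij}$, the four-dimensional handlebodies they cobound in pairs are the $V_i$, and $\Sigma$ is the central surface — giving a $(g,k_1,k_2,k_3)$-trisection of $X$. To upgrade it to a balanced $(g,k,k,k)$-trisection I would apply the unbalanced trisection stabilization move, which raises a single $k_i$ by $1$ and $g$ by $1$, until the three integers $k_1,k_2,k_3$ agree. (For completeness: Gay and Kirby's original proof instead perturbs a generic smooth map $X\to\R^2$ to a stable map whose critical locus consists only of folds and cusps, and then — using Cerf theory and Eliashberg's elimination of definite folds — normalizes the image of the critical locus to a ``concentric'' configuration over a disk divided into three sectors by rays from its centre, so that the preimages of the three sectors, the three rays, and the centre are respectively the $V_i$, the $H_{ij}$, and $\Sigma$; there the normalization of the fold/cusp locus plays the role of the main obstacle.)
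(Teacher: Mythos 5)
The paper itself offers no proof of this statement; it is imported from Gay and Kirby, whose primary argument runs through Morse $2$-functions (as you note at the end), though they also give an existence proof from handle decompositions along the general lines you sketch. Measured against either, your proposal has a genuine gap: the step you yourself flag as ``the technical heart'' --- simultaneously normalizing the three cut systems $\alpha,\beta,\gamma$ by handleslides and stabilizations --- is left entirely unresolved, and as described it is also not the right thing to attempt. The handle-decomposition proof does not proceed by massaging a diagram into normal form and then reading off the trisection; it constructs the three $4$-dimensional pieces directly and lets the diagrammatic standardness fall out afterwards (via Waldhausen's theorem, Theorem 1.13 of this paper, once each pairwise intersection is known to be a genus-$g$ handlebody).

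The two ingredients that actually do the work are these. First, a framed link $L\subset\partial Z_1=\#^{k_1}S^1\times S^2$ can, after sufficiently many stabilizations of a Heegaard surface $\Sigma$ of $\partial Z_1$ (one tube per crossing of a generic projection of $L$ onto $\Sigma$, plus further stabilizations at points of $L$ to correct the framing), be isotoped to lie \emph{on} $\Sigma$ with its given framing equal to the surface framing. This lemma replaces your ``simultaneous normalization'': with $L\subset\Sigma$ one sets $V_1=Z_1$, takes $V_2$ to be a thickening of one side of the Heegaard splitting together with the $2$-handles, and takes $V_3$ to be the closure of the complement, and the pairwise intersections are visibly handlebodies. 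Second, the Laudenbach--Po\'{e}naru theorem, that every self-diffeomorphism of $\#^{k}S^1\times S^2$ extends over $\natural^{k}S^1\times D^3$, is needed and is absent from your proposal: without it you cannot conclude that $V_3$, which is assembled from the $3$- and $4$-handles glued along some unknown diffeomorphism of $\#^{k_3}S^1\times S^2$, is diffeomorphic to $\natural^{k_3}S^1\times D^3$, nor that a trisection reconstructed from a diagram recovers $X$ rather than some other manifold (this is the same ingredient underlying Proposition 1.6 of the paper). Your final balancing step --- stabilize to equalize the $k_i$, raising $g$ each time --- is correct and consistent with Definition 1.10.
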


In Definition 1.3 above, $(H_{ij}, H_{ik},\Sigma)$ is a Heegaard splitting of $\partial V_i\cong \#^{k_i} S^1\times S^2$ whenever $i,j,$ and $k$ are distinct.  It is to these triples of handlebodies that the traditional theory of Heegaard splittings can be applied with some success, as is shown in the work of Meier-Zupan \cite{Meier-Zupan} and the present work.

\begin{definition}

A {\em handlebody tripod} is a union $H_1\cup H_2\cup H_3$ of $3$-dimensional handlebodies such that, for all $i\neq j$, $(H_i,H_j,H_i\cap H_j)$ forms a Heegaard splitting of $S^3$ or $\#^kS^1\times S^2$ for some $k$.

\end{definition}

The handlebodies $H_{12}\cup H_{13}\cup H_{23}$ of Definition 1.3 form a handlebody tripod, which we call the {\em characteristic tripod} of the trisection.

\begin{proposition} \cite{Gay-Kirby} For any handlebody tripod $H_1\cup H_2\cup H_3$ there is a unique (up to diffeomorphism) $4$-manifold $X$ for which $H_1\cup H_2\cup H_3$ forms the characteristic tripod of a GK-trisection.

\end{proposition}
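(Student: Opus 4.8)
The plan is to separate the claim into an existence statement and a uniqueness statement, the latter resting on one classical input: the Laudenbach--Po\'enaru theorem, that every self-diffeomorphism of $\#^k(S^1\times S^2)$ extends to a self-diffeomorphism of the $4$-dimensional $1$-handlebody $\natural^k(S^1\times D^3)$ that it bounds. To fix notation, let $T=H_1\cup H_2\cup H_3$ be the given handlebody tripod; since each $(H_i,H_j,H_i\cap H_j)$ is a Heegaard splitting we necessarily have $H_a\cap H_b=\Sigma:=\partial H_1$ for all $a\neq b$. For $\{i,j,k\}=\{1,2,3\}$ set $Y_i:=H_j\cup_\Sigma H_k$; then $(H_j,H_k,\Sigma)$ is a Heegaard splitting of $Y_i$, so by the definition of a handlebody tripod $Y_i$ is diffeomorphic to $S^3$ or to some $\#^{k_i}(S^1\times S^2)$, and in particular $Y_i$ bounds a genus $k_i$ $4$-handlebody.

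For existence, I would take $V_i$ to be a genus $k_i$ $4$-handlebody equipped with an identification $\partial V_i\cong Y_i$, and then glue the $V_i$ together along their boundaries in the pattern dictated by $T$: first glue $V_1$ to $V_2$ along the codimension-zero submanifold $H_3\subset \partial V_1=Y_1$, identified with $H_3\subset\partial V_2=Y_2$, obtaining a $4$-manifold whose boundary is $H_1\cup_\Sigma H_2=Y_3$; then cap this boundary off with $V_3$. Performing the corner smoothing along $\Sigma$ prescribed above for GK-trisections yields a closed smooth $4$-manifold $X$, and one reads off from the construction that $V_i\cap V_j=H_k$, that $V_1\cap V_2\cap V_3=\Sigma$, and that $\partial V_i$ is the union of the two handlebodies $V_i\cap V_j$, $V_i\cap V_k$; hence $(V_1,V_2,V_3,\Sigma)$ is a GK-trisection of $X$ whose characteristic tripod is exactly $T$.

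For uniqueness, the crux is a rel-boundary rigidity statement for $4$-handlebodies: \emph{if $V$ and $V'$ are (smooth) $4$-handlebodies with $\partial V=\partial V'=Y$, then there is a diffeomorphism $V\to V'$ restricting to the identity on $Y$.} Indeed, $V$ and $V'$ are each diffeomorphic to $\natural^k(S^1\times D^3)$ for the same $k$ (the genus being determined by $H_1(Y)$), so after composing with such diffeomorphisms the two boundary identifications of $Y$ differ by a self-diffeomorphism of $Y\cong\#^k(S^1\times S^2)$; by Laudenbach--Po\'enaru that self-diffeomorphism extends over $\natural^k(S^1\times D^3)$, and composing with the extension produces the desired diffeomorphism $V\to V'$ equal to the identity on $Y$. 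Granting this, suppose $(V_1,V_2,V_3,\Sigma)$ and $(V_1',V_2',V_3',\Sigma)$ are GK-trisections of $4$-manifolds $X$ and $X'$ with the same characteristic tripod $T$. Permuting the indices of the second trisection (which changes neither $X'$ nor its characteristic tripod) we may arrange that $\partial V_i=\partial V_i'=Y_i$ for each $i$, and then the rigidity statement supplies diffeomorphisms $g_i\colon V_i\to V_i'$ with $g_i|_{Y_i}=\mathrm{id}$. Since $V_i\cap V_j=H_k$ sits inside both $\partial V_i=Y_i$ and $\partial V_j=Y_j$, the maps $g_i$ and $g_j$ agree (both restrict to the identity) on $V_i\cap V_j$, so they assemble into a homeomorphism $g\colon X\to X'$ that is smooth away from the corner $\Sigma$; a final smoothing near $\Sigma$, using collar/product neighborhoods $\Sigma\times D^2$ of $\Sigma$ on the two sides that are compatible with the tripod, upgrades $g$ to a diffeomorphism, so $X\cong X'$.

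The main obstacle is precisely the rel-boundary rigidity of $4$-handlebodies, and it is here that the Laudenbach--Po\'enaru theorem is indispensable: without it one could not exclude the possibility that the two trisections attach genuinely inequivalent $4$-handlebodies to the same $Y_i$. Everything else is essentially bookkeeping about how the pieces $Y_i$ and $H_\ell$ overlap, together with the routine (if slightly fussy) task of smoothing the assembled map $g$ along the corner locus $\Sigma$ where three $4$-dimensional pieces meet, which is handled in the usual way once one fixes compatible product structures near $\Sigma$ on both sides.
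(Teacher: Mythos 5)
The paper does not prove this proposition; it is quoted from Gay--Kirby, whose argument is exactly the one you give (existence by capping each $Y_i=H_j\cup_\Sigma H_k\cong\#^{k_i}(S^1\times S^2)$ with a $4$-dimensional handlebody, uniqueness from the Laudenbach--Po\'enaru extension theorem). Your write-up is correct and correctly identifies Laudenbach--Po\'enaru as the essential input, so there is nothing to compare beyond noting that you have reproduced the standard proof of the cited result.
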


It follows that handlebody tripods can be used to distinguish smooth manifolds from one another.  In particular, if a topological $4$-manifold $X$ admits multiple smooth structures, each smooth structure will have a distinct class of handlebody tripods associated with it.  Naturally, the case when $X=S^4$ is of special interest, so before moving forward we'll make a few motivational observations about it.

The {\em topological} Poincar\'{e} conjecture is known to be true in dimension $4$ by Freedman's work \cite{Freedman}, and the homotopy type of a simply connected $4$-manifold is determined by its intersection form.  Therefore any closed, simply connected $4$-manifold $X$ with Euler characteristic $\chi(X)=2$ is homeomorphic to $S^4$.  Moreover, it is an elementary exercise to show that if $X$ admits a $(g,k_1,k_2,k_3)$-trisection, then $\chi (X)=2+g-k_1-k_2-k_3$. This yields the following:

\begin{proposition}

If $X$ admits a $(k_1+k_2+k_3,k_1,k_2,k_3)$-trisection and $\pi_1(X)=0$, then $X$ is homeomorphic to $S^4$.

\end{proposition}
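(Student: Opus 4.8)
The plan is to combine the two observations already assembled in the excerpt. First I would invoke the Euler characteristic formula $\chi(X)=2+g-k_1-k_2-k_3$, whose (elementary, handle-counting) proof I would either cite or sketch in a single line: a $(g,k_1,k_2,k_3)$-trisection gives a handle decomposition of $X$ with one $0$-handle, $k_1$ $1$-handles (from $V_1$, say), $g$ $2$-handles (attached along the pages), $k_2$ $3$-handles, and $k_3$ $4$-handles — more symmetrically, one can read off $b_0=b_4=1$ and the middle Betti contributions directly — so that the alternating sum is $1-k_1+g-k_2-k_3+1=2+g-k_1-k_2-k_3$. Specializing to $g=k_1+k_2+k_3$ gives $\chi(X)=2$ immediately.

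Next I would record that $X$ is closed, smooth, oriented, and connected by hypothesis (these are built into Definition 1.3 and the definition of a trisection), and that $\pi_1(X)=0$ is assumed. Hence $X$ is a closed, simply connected $4$-manifold with $\chi(X)=2$. Since $X$ is simply connected, $H_1(X)=0$ and by Poincar\'e duality $H_3(X)=0$, while $H_0(X)=H_4(X)=\mathbb{Z}$; therefore $\chi(X)=2+b_2(X)$, forcing $b_2(X)=0$, so $H_2(X)=0$ and $X$ is a homology sphere. By the Hurewicz theorem and Whitehead's theorem, $X$ is homotopy equivalent to $S^4$. Freedman's solution of the topological Poincar\'e conjecture in dimension $4$ \cite{Freedman} then upgrades this homotopy equivalence to a homeomorphism $X\cong S^4$, as the excerpt already notes in the paragraph preceding the statement.

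I do not anticipate a serious obstacle here: the proposition is essentially a bookkeeping consequence of the Euler characteristic identity together with a black-box appeal to Freedman. The only point requiring minor care is the Euler characteristic count itself — one must be a little careful about which $V_i$ supplies the $1$-handles versus the $3$-handles, and about the fact that the pages (the genus-$g$ surface and the handlebodies $H_{ij}$) contribute the $g$ two-handles — but this is routine and in any case is asserted in the excerpt as an elementary exercise, so in the write-up I would simply cite that identity and deduce $\chi(X)=2$, then quote Freedman. If a self-contained argument is wanted, the cleanest route to the identity is to note that $X$ deformation retracts onto the $2$-complex $H_{12}\cup H_{13}\cup H_{23}$ union the cores of $V_1,V_2,V_3$ only through dimension giving $\chi$ additively via inclusion-exclusion on the $V_i$ and $H_{ij}$: $\chi(X)=\sum_i\chi(V_i)-\sum_{i<j}\chi(H_{ij})+\chi(\Sigma)=3(1-k_i\text{-type terms})-3(1-g)+ (2-2g)$, which simplifies to $2+g-k_1-k_2-k_3$.
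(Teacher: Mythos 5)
Your argument is correct and is essentially the paper's own: the paper derives the proposition directly from the identity $\chi(X)=2+g-k_1-k_2-k_3$ (left as an elementary exercise) together with Freedman's theorem and the fact that the homotopy type of a closed simply connected $4$-manifold is determined by its intersection form. One small caution: your handle-counting heuristic has the signs wrong (a trisection does not hand you that handle decomposition so directly), but your inclusion--exclusion computation $\chi(X)=\sum_i\chi(V_i)-\sum_{i<j}\chi(H_{ij})+\chi(\Sigma)=(3-k_1-k_2-k_3)-3(1-g)+(2-2g)$ is the correct justification, so nothing is missing.
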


Since a $4$-manifold $X$ is obtained from any of its characteristic tripods $Y$ by attaching $3$- and $4$-handles, the map $\pi_1(Y)\rightarrow \pi_1(X)$ induced by inclusion is an isomorphism.  Therefore, the class of simply connected $(k_1+k_2+k_3,k_1,k_2,k_3)$ handlebody tripods is precisely the class of characteristic tripods for smooth manifolds $X$ which are homeomorphic to $S^4$.  Finding a nice way of characterizing the (proper?) subclass of such tripods which impose the standard smooth structure on $S^4$ would thus be an important step to resolving the last open case of the generalized Poincar\'{e} conjecture.

Gay and Kirby provide us with our first tool in understanding the class of handlebody tripods which define a given smooth manifold--it is the analogue of the Reidemeister-Singer theorem for Heegaard splittings.  We require the some new definitions to state these theorems.

\begin{definition}

Let $\mathcal{H}=(H_1,H_2,\Sigma)$ be a Heegaard splitting of $M$ and let $\alpha$ be a boundary parallel arc properly embedded in $H_2$.  Then if $H_1'=H_1\cup N(\alpha)$, $H_2'=E(\alpha, H_2)$ and $\Sigma'=H_1'\cap H_2'$, the triple $\mathcal{H}'=(H_1',H_2',\Sigma')$ is also a Heegaard splitting of $M$, called the {\em stabilization} of $\mathcal{H}$.  Conversely, we say that $\mathcal{H}$ is a {\em destabilization} of $\mathcal{H}'$.

\end{definition}

The stabilization of a Heegaard splitting is unique up to isotopy.  That is, if $(H_1',H_2',\Sigma')$ and $(H_1'',H_2'',\Sigma'')$ are both stabilizations of the same Heegaard splitting of $M$, then there is an isotopy of $M$ taking $\Sigma'$ to $\Sigma''$ and $H_i'$ to $H_i''$ for $i=1,2$ (in general we say that two Heegaard splittings are isotopic in this case).  Destabilizations do not always exist and when they do they are not always unique up to isotopy.  The following is a classical result of Reidemeister and Singer.

\begin{theorem} \cite{Reidemeister},\cite{Singer}
Suppose $\mathcal{H}_1$ and $\mathcal{H}_2$ are Heegaard splittings of $M$.  Then there are integers $n$ and $m$ such that the result of stabilizing $\mathcal{H}_1$ $n$ times is isotopic to the result of stabilizing $\mathcal{H}_2$ $m$ times.
\end{theorem}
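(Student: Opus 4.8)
The plan is to pass from Heegaard splittings to Morse functions and then invoke the Cerf theory of generic one-parameter families of functions. The first ingredient is the standard dictionary between the two. Every genus-$g$ Heegaard splitting $\mathcal{H}=(H_1,H_2,\Sigma)$ of $M$ is realized by a self-indexing Morse function $f\colon M\to[0,3]$ having a single index-$0$ critical point, a single index-$3$ critical point, $g$ critical points of index $1$, $g$ of index $2$, and satisfying $H_1=f^{-1}([0,3/2])$, $H_2=f^{-1}([3/2,3])$, $\Sigma=f^{-1}(3/2)$; such an $f$ exists because $H_1$ is built from one $0$-handle and $g$ $1$-handles while $H_2$, built up from $\Sigma$, consists of $g$ $2$-handles and one $3$-handle, and these fit together along $\Sigma$. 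Conversely the middle level set of any such $f$ is a Heegaard surface, and deforming $f$ within this class moves $\Sigma$ by an ambient isotopy of $M$. The crucial observation is that the stabilization operation introduced above corresponds to a \emph{birth} move on $f$: the neighborhood $N(\alpha)$ of the boundary-parallel arc $\alpha\subset H_2$ becomes a new $1$-handle of $H_1'$, and removing it from $H_2$ amounts to adding a dual $2$-handle; since $\alpha$ is boundary-parallel these two form a cancelling pair, that is, a pair of critical points of indices $1$ and $2$ with critical values straddling $3/2$. Hence stabilization and destabilization of a Heegaard splitting are precisely birth and death of an index-$(1,2)$ critical pair.

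Next I would take self-indexing Morse functions $f_0,f_1$ realizing $\mathcal{H}_1,\mathcal{H}_2$ and join them by a path $\{f_t\}_{t\in[0,1]}$ in the path-connected space $C^\infty(M,\R)$. By Cerf's theorem the path can be chosen generic, so that $f_t$ is Morse with distinct critical values for all but finitely many $t$, and at each exceptional parameter either two critical values cross or a single birth-death degeneracy occurs. The technical heart of the argument is to improve $\{f_t\}$ further, rel endpoints, so that in addition: (i) $f_t$ has exactly one local minimum and one local maximum for every $t$ --- this forbids index-$(0,1)$ and index-$(2,3)$ births and deaths, and is arranged by trading a would-be extra minimum for an extra index-$1$ critical point, and dually for maxima; and (ii) away from the exceptional parameters all index-$1$ critical values of $f_t$ lie below $3/2$ and all index-$2$ critical values above, so that $f_t^{-1}(3/2)$ is a genuine Heegaard surface throughout. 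Granting (i) and (ii), I would follow $\Sigma_t=f_t^{-1}(3/2)$ as $t$ runs from $0$ to $1$: at a crossing of two critical values of equal index the relevant $1$-handles or $2$-handles merely slide past one another, so $\mathcal{H}_t$ changes only by an ambient isotopy of $M$; and at each birth-death parameter, by the dictionary above, $\mathcal{H}_t$ jumps by a single stabilization or destabilization. Therefore $\mathcal{H}_1$ and $\mathcal{H}_2$ are joined by a finite chain of stabilizations and destabilizations.

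To finish I would put this chain into the symmetric form demanded by the statement. Whenever a destabilization is immediately followed by a stabilization, the splittings occurring just before and just after are both stabilizations of the splitting in between, hence isotopic by the uniqueness of stabilization recorded above; deleting this redundant pair shortens the chain. Iterating, the chain reduces to one that first performs only stabilizations and then only destabilizations, so there is a Heegaard splitting $\mathcal{K}$ obtained from $\mathcal{H}_1$ by $n$ stabilizations and from $\mathcal{H}_2$ by $m$ stabilizations; applying uniqueness of stabilization once more, the two descriptions of $\mathcal{K}$ agree up to isotopy, which is exactly the assertion.

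The step I expect to be the main obstacle is the passage from a merely generic Cerf path to one meeting conditions (i) and (ii): ruling out spurious minima and maxima and guaranteeing that the level $3/2$ always separates the $1$-handles from the $2$-handles. This is where genuine Cerf-theoretic input --- handle trading and rearrangement of critical values --- is needed; the remainder is the Morse-theoretic dictionary together with bookkeeping that rests on the uniqueness of stabilization. (Singer's original argument instead proceeds combinatorially, via common subdivisions of triangulations of $M$; I find the Morse-theoretic route cleaner to organize.)
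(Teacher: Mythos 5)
The paper does not prove this statement: it is quoted as the classical Reidemeister--Singer theorem, with citations to the original 1933 papers, and is then used only as motivation for the trisection analogue (Theorem 1.11); the results of the paper actually rely on Waldhausen's theorem instead. So there is no internal proof to compare yours against. Your route --- encoding a splitting by a self-indexing Morse function, identifying stabilization with the birth of a cancelling index-$(1,2)$ pair, joining two such functions by a generic Cerf path, and finally cancelling adjacent destabilization/stabilization pairs via uniqueness of stabilization to extract a common stabilization --- is the standard modern (Cerf-theoretic) proof, quite different in flavor from Reidemeister's and Singer's original combinatorial arguments via common refinements of presentations/triangulations. The dictionary in your first paragraph and the chain-reduction in your third are correct as stated.

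The genuine gap is condition (ii), which you assert can be ``arranged'' and then treat as bookkeeping. In a generic one-parameter family one cannot, in general, keep every index-$1$ critical value below every index-$2$ critical value away from the birth-death times: reordering a pair of critical values requires disjoining the ascending sphere of the lower critical point from the descending sphere of the upper one in an intermediate level, and for an index-$1$/index-$2$ pair in a $3$-manifold these spheres are generically obstructed from being separated at isolated parameter values. Concretely, the Cerf graphic of the path will in general contain crossings between index-$1$ and index-$2$ value curves that cannot be removed by deforming the path, and at such a crossing there is momentarily no regular level $c(t)$ separating the $1$-handles from the $2$-handles, so $f_t^{-1}(3/2)$ fails to be a Heegaard surface. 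The actual content of the theorem lives precisely at these crossings: one must show that the Heegaard splittings read off just before and just after such a crossing become isotopic after stabilization (equivalently, that the crossing can be traded for an extra birth-death pair that restores the ordering). Your phrase ``handle trading and rearrangement of critical values'' names the right toolbox, but as written the proposal concludes that the splitting changes only at birth-death times, which presupposes the very step that needs a genuine argument. Until that step is supplied, the proof is an architecture rather than a proof.
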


Using the lingo, one says that any two Heegaard splittings of a $3$-manifold are ``stably isotopic.'' Using a very similar notion of stabilization for GK-trisections, Gay and Kirby proved that any two trisections are stably isotopic.

\begin{definition}

Given a trisection $\mathcal{T}=(V_1,V_2,V_3)$ of $X$, a {\em $1$-stabilization} of $\mathcal{T}$ is a trisection $\mathcal{T}'=(V_1\cup N(\alpha),E(\alpha, V_2),E(\alpha, V_3))$, where $\alpha\subset V_2\cap V_3$ is a properly embedded, $\partial$-parallel arc. An {\em $i$-stabilization} is defined similarly for $i=2,3$. Conversely $\mathcal{T}$ is said to be an $i$-{\em destabilization} of $\mathcal{T}'$.

\end{definition}

If $\mathcal{T}$ is a $(g,k_1,k_2,k_3)$-trisection, then the $1$-stabilization of $\mathcal{T}$ is a $(g+1,k_1+1,k_2,k_3)$ trisection (and similarly for $i=2,3$).  In Definition 1.8, it did not matter whether the arc $\alpha$ was properly embedded in $H_1$ or $H_2$, as the resulting Heegaard splittings will be isotopic. With trisections, the result of an $i$-stabilization is generally not isotopic to a $j$-stabilization when $i\neq j$ (we say that two trisections $(V_1,V_2,V_3,\Sigma)$ and $(V_1',V_2',V_3',\Sigma')$ of $X$ are isotopic if there is an isotopy of $X$ taking $V_i$ to $V_i'$, for $i=1,2,3$).  However, any two $i$-stabilizations of a trisection are isotopic.  Moreover, $i$- and $j$-stabilizations commute, that is, the result of performing an $i$-stabilization followed by a $j$-stabilization is isotopic to the result of performing a $j$-stabilization followed by an $i$-stabilization.

\begin{theorem} \cite{Gay-Kirby} Any two GK-trisections of a smooth $4$-manifold $X$ are stably isotopic.

\end{theorem}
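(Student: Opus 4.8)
The plan is to mimic the classical proof of the Reidemeister--Singer theorem, replacing Cerf-theoretic or Morse-theoretic input on functions $M \to \R$ with the Morse $2$-function theory that Gay and Kirby develop alongside their existence result (Theorem 1.4). First I would recall that a trisection $\mathcal{T}$ of $X$ is equivalent data to a suitable Morse $2$-function $f \colon X \to \R^2$ (or $S^2$) whose critical image is a standard ``trisected'' picture: a collection of concentric arcs/fold curves arranged so that the three preimages of the three sectors of the base are the $4$-handlebodies $V_1, V_2, V_3$, with the folds recording how $\Sigma$ bounds handlebodies on each side. The starting point is therefore: given two trisections $\mathcal{T}_1, \mathcal{T}_2$ of $X$, realize each by such a Morse $2$-function $f_1, f_2$.

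Next I would invoke the uniqueness part of Cerf theory for Morse $2$-functions: any two generic maps $X \to \R^2$ (or maps into the base of the trisection) that are ``indefinite'' in the appropriate sense can be connected by a generic homotopy, i.e.\ a path in the space of smooth maps $X \times I \to \R^2$ whose singularities are controlled by a known list of codimension-one phenomena (births/deaths of fold circles, fold crossings, eye/swallowtail moves, etc.). This is the analogue of the fact, used in the Reidemeister--Singer proof, that any two Morse functions on $M$ are joined by a path crossing only birth/death and crossing singularities. Then I would analyze each elementary move along the homotopy and check that, when it occurs, the corresponding change to the trisected picture can be reabsorbed into a sequence of $i$-stabilizations and $i$-destabilizations (together with isotopy). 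The moves that \emph{introduce} new fold circles correspond precisely to stabilizations; the ones that merely slide folds past each other correspond to isotopies of the trisection; the potentially troublesome moves are those that would take us outside the class of honest trisections, and these must be handled by first stabilizing enough to make the picture ``fat'' and then performing the move safely, exactly as one does in the $3$-dimensional argument.

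The main obstacle, as in Reidemeister--Singer, is controlling the case analysis: one must argue that every codimension-one event in a generic homotopy of Morse $2$-functions either preserves the isotopy class of the associated trisection or changes it by a (de)stabilization, and that the intermediate objects crossed during, say, a fold-crossing can always be made into genuine trisections after sufficiently many preliminary stabilizations. In particular one must ensure that the net effect of going from $f_1$ to $f_2$ is a sequence of stabilizations and \emph{destabilizations}, and then appeal to the fact that stabilizations commute and any two stabilizations of the same index are isotopic to rearrange this sequence into ``stabilize $\mathcal{T}_1$ some number of times, then stabilize $\mathcal{T}_2$ some number of times, and the results are isotopic.'' I would also need the auxiliary fact, stated in the excerpt, that $i$-stabilizations are unique up to isotopy and that $i$- and $j$-stabilizations commute, to clean up the bookkeeping at the end.

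Since the full verification is essentially the content of Gay--Kirby's paper, I would present this as a sketch and cite \cite{Gay-Kirby} for the detailed Cerf-theoretic analysis, emphasizing only the structural parallel with the Reidemeister--Singer argument for Heegaard splittings.
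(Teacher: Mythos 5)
This statement is quoted verbatim from Gay--Kirby and the paper supplies no proof of its own---it is used as a black box---so there is nothing internal to compare your argument against. Your sketch is a fair outline of the route Gay and Kirby actually take (trisections as Morse $2$-functions with standard critical image, a generic homotopy between two such, and a case analysis showing each codimension-one event is absorbed by isotopy or by $i$-(de)stabilization, followed by the commuting/uniqueness properties of stabilizations to tidy the sequence), and since you explicitly defer the Cerf-theoretic verification to \cite{Gay-Kirby}, your proposal is in effect the same ``proof by citation'' the paper uses, just with more scaffolding. The one place where your sketch underplays the difficulty is the claim that any two such maps are joined by a generic homotopy with a controlled singularity list \emph{while staying indefinite and fiber-connected}: that connectivity statement is itself a substantial theorem of Gay--Kirby's Morse $2$-function theory, not a routine transversality fact, so it should be flagged as part of what is being cited rather than as background.
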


Aside from Theorem 1.11, we know almost nothing about the set of GK-trisections associated with a smooth manifold $X$.  Even the case of $S^4$ (equipped with its standard smooth structure) is not well understood.  It admits the trivial $(0,0,0,0)$-trisection $\mathcal{S}_0$, which is unique up to isotopy, as well as the trisections $\mathcal{S}^{k_1,k_2,k_3}$ obtained by performing $k_i$ $i$-stabilizations of $\mathcal{S}_0$ for $1\leq i\leq 3$ (also unique up to isotopy).  The following question is open:

\begin{question}

Does $S^4$, with its standard smooth structure, admit any GK-trisections other than $\mathcal{S}_0$ and trisections of the form $\St^{k_1,k_2,k_3}$?

\end{question}

The answer to Question 1.12 has a significance beyond $S^4$.  To see why, note that the connect sum operation extends naturally to GK-trisections using the formula: $$(V_1,V_2,V_3,\Sigma)\#(V_1',V_2',V_3',\Sigma')=(V_1\natural V_1',V_2\natural V_2',V_3\natural V_3',\Sigma\#\Sigma').$$  For example, in this terminology the $1$-stabilization of a trisection $\mathcal{T}$ is $\mathcal{T}\#\mathcal{S}^{1,0,0}$.  More generally, given any trisections $\mathcal{T}$ and $\mathcal{S}$ of $X$ and $S^4$, respectively, we can obtain another trisection $\mathcal{T}\#\mathcal{S}$ of $X$.  Therefore any complexity in the set of trisections for $S^4$ will be inherited by all closed smooth $4$-manifolds.

Waldhausen proved that the correlate of Question 1.12 has a positive answer for Heegaard splittings of $S^3$. In fact, we will make heavy use of the following theorem.

\begin{theorem} \cite{Waldhausen} Suppose $M=\#^{k}S^1\times S^2$ (where we define $\#^0S^1\times S^2=S^3$).  Then for all $g\geq k$, there is a unique genus $g$ Heegaard splitting of $M$ up to isotopy.

\end{theorem}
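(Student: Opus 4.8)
The plan is to reduce to the prime summands $S^3$ and $S^1\times S^2$, dispatch the minimal-genus splittings by hand, and then induct on genus by showing that every non-minimal splitting destabilizes.

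First I would invoke \emph{Haken's lemma}: if $M=M'\#M''$ and $(H_1,H_2,\Sigma)$ is a Heegaard splitting of $M$, then $M$ contains an essential $2$-sphere meeting $\Sigma$ in a single circle, so $\Sigma$ splits as a connected sum $\Sigma'\#\Sigma''$ of Heegaard surfaces of $M'$ and $M''$. Applied repeatedly to $M=\#^kS^1\times S^2$, this reduces the theorem to two statements: (i) every genus-$g$ Heegaard splitting of $S^3$ is isotopic to the $g$-fold stabilization of $B^3\cup B^3$; and (ii) every genus-$g$ Heegaard splitting of $S^1\times S^2$ with $g\geq 1$ is isotopic to the $(g-1)$-fold stabilization of the standard genus-$1$ splitting $S^1\times D^2\cup S^1\times D^2$. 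Since the connected-sum decomposition of a Heegaard splitting is itself essentially unique, and since stabilization is unique up to isotopy and additive under connected sum, (i) and (ii) reassemble into the full statement.

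The crux is the following \emph{destabilization lemma}: for $g\geq 1$, every genus-$g$ Heegaard splitting of $S^3$ admits a destabilization, and likewise for $g\geq 2$ over $S^1\times S^2$; that is, there are compressing disks $D_1\subset H_1$ and $D_2\subset H_2$ whose boundaries meet transversely in a single point of $\Sigma$. Granting this, induction on $g$ bottoms out at the genus-$0$ splitting of $S^3$ (trivially unique) and the genus-$1$ splitting of $S^1\times S^2$, which I would handle directly --- such a splitting expresses $S^1\times S^2$ as two solid tori glued along their boundary tori, and the gluing map is pinned down up to isotopy by the homology of the manifold together with the classification of isotopy classes of curves on the torus --- and the inductive step is closed by the uniqueness of stabilization. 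The final bookkeeping, once a dual pair $(D_1,D_2)$ is found, is the routine observation that a regular neighborhood of $D_1\cup D_2$ is a standard stabilizing handle, so excising it yields a genus-$(g-1)$ splitting of the same manifold.

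Proving the destabilization lemma is, I expect, the main obstacle, and it is essentially the whole content of the theorem. The starting observation is algebraic: surgering $H_1$ along the meridian disks of $H_2$ produces a $3$-ball (for $S^3$) or a copy of $S^1\times D^2$ (for $S^1\times S^2$), so the boundary curves $\delta_1,\dots,\delta_g\subset\Sigma$ of those disks normally generate $\pi_1(H_1)\cong F_g$, and dually the meridians of $H_1$ do the same in $H_2$. The work is to convert this into geometry: taking complete disk systems for $H_1$ and $H_2$, put their boundary curves in minimal position on $\Sigma$ and run an innermost-disk/outermost-arc analysis on their intersection pattern. Using that each family consists of disjoint simple closed curves, and using the precise homology (indeed simple connectivity, in the $S^3$ case) of $M$, one forces a meridian of $H_1$ and a meridian of $H_2$ that meet in exactly one point rather than merely an odd number of points; controlling the isotopies so that this count genuinely drops to one, and not just to an odd value, is the delicate step where the classical argument demands the most care.
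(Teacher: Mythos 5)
This statement is quoted in the paper as Waldhausen's theorem and used as a black box; the paper contains no proof of it, so there is no internal argument to measure yours against. Judged on its own, your reduction scheme is the standard modern framing: Haken's lemma to cut along essential spheres, direct treatment of the genus-$0$ splitting of $S^3$ and the genus-$1$ splitting of $S^1\times S^2$, and uniqueness of stabilization to close an induction on genus. One structural point needs attention: for $M=\#^k S^1\times S^2$ the reducing sphere supplied by Haken's lemma may be \emph{non-separating}, in which case $\Sigma$ does not decompose as $\Sigma'\#\Sigma''$ of splittings of proper summands; instead one compresses to lower $g$ and $k$ simultaneously, and this case must be folded into the induction explicitly.

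The genuine gap is the destabilization lemma, which you correctly identify as ``essentially the whole content of the theorem'' but do not prove. The argument you sketch --- put complete disk systems of $H_1$ and $H_2$ in minimal position and run an innermost-disk/outermost-arc analysis until some meridian of $H_1$ meets some meridian of $H_2$ in a single point --- is a description of the desired conclusion, not a mechanism for reaching it. Two complete disk systems in minimal position can perfectly well have every pair of boundary curves meeting in zero or many points, and no routine innermost-disk count extracts a dual pair from them; the simple connectivity of $S^3$ enters only through substantially more global arguments (Waldhausen's original proof, or the later simplifications via sweep-outs and thin position). The sentence ``one forces a meridian of $H_1$ and a meridian of $H_2$ that meet in exactly one point'' \emph{is} the theorem. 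So what you have is an accurate reduction of the statement to its hard kernel, together with an honest acknowledgement that the kernel is delicate --- but the kernel is left unproved, and the strategy proposed for it would not succeed as stated.
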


This theorem, though incredibly useful, highlights the main difficulty in understanding trisections of $4$-manifolds.  Theorem 1.13 tells us that if $H_1\cup H_2\cup H_3$ is a $(g,k_1,k_2,k_3)$-handlebody tripod, then each pair $(H_i,H_j,H_i\cap H_j)$ of Heegaard splittings is essentially unique and, moreover, quite simple as far as Heegaard splittings go.  The complexity of a trisection is therefore not contained in the way that each pair of handlebodies is glued together, but is irreducibly tied to the simultaneous combination of all three handlebodies.

However, it is at first difficult to see how complicated trisections can be constructed at all, given the restrictive manner in which each pair of handlebodies must be glued together.  Indeed, Meier and Zupan have shown that there are (up to diffeomorphism) only a handful of distinct $(g,k,k,k)$ trisections for $0\leq k\leq g\leq 2$, and they are all ``standard'' \cite{Meier-Zupan}.  This fact is by no means obvious--their proof is a combinatorial {\em tour de force}.

Our main focus in this paper shall be on the classification of $(g,k_1,k_2,0)$-trisections when $k_1\geq g-1$.  If $X$ admits such a trisection, then it is simply connected, so $\chi(X)\geq 2$ and thus $k_1+k_2\leq g$ (more generally, $k_1+k_2+k_3\leq g+2\min_i\{k_i\}$).  It is easy to show that all $(g,g,k_1,k_2)$-trisections satisfy $k_1=k_2$ and are very simple.  In particular, the only $(g,g,0,0)$-trisection is $\St^{g,0,0}$ (see Proposition 3.4 below).  Most of our work goes into the classification of $(g,g-1,k,0)$-trisections given in Theorem 3.6 below, which can be restated here as follows.\\

\noindent{\bf Theorem 3.6.}  {\em Up to diffeomorphism, the only $(g,g-1,1,0)$-trisection is $\St^{g-1,1,0}$, and the only $(g,g-1,0,0)$-trisection is $\mathcal{T}\#\St^{g-1,0,0}$, where $\mathcal{T}$ is the standard genus $1$ trisection of $\mathbb{CP}^2$.}\\

Theorem 3.6 gives a positive answer to Question 1.12 for the case of $(g,g-1,1,0)$-trisections.  At the end of Section 3 we describe a class of GK-trisections which appear to be somewhat exotic.  If this is true, then the simplicity of Theorem 3.6 does not carry over to the case of $(g,k_1,k_2,0)$-trisections when $k_1<g-1$.  Moreover, if our examples are exotic, then a positive answer to Question 1.12 for all $(g,k,g-k,0)$-trisections implies that the Poincar\'{e} conjecture is false.

\section{A handle sliding lemma}

In this section, the word ``handlebody'' always refers to a $3$-handlebody.  An ``essential'' simple closed curve on a surface is one which does not bound a disk on that surface.  The purpose of this section is to prove Lemma 2.8.

\begin{definition}

Let $M$ be a $3$-manifold and $\alpha=\alpha_1\cup\cdots\cup \alpha_n$ be a disjoint union of essential simple closed curves embedded in $\partial M$. Then $M(\alpha)$ shall denote the $3$-manifold obtained by attaching $2$-handles to $M$ along the curves $\alpha$.

\end{definition}

\begin{definition}

Suppose $H$ is a genus $g$ handlebody and $\alpha=\alpha_1\cup\cdots \cup \alpha_n$ is a disjoint union of essential simple closed curves embedded in $\partial H$.  Let $M$ be the manifold obtained from $H(\alpha)$ by capping off any spherical boundary components with $3$-handles.  Then $\alpha$ is said to be a {\em Heegaard diagram} of $M$ on $H$.  $\alpha$ is said to be {\em dualized} if there exists a union of disks $D=D_1\cup\cdots \cup D_n$ properly embedded in $H$ such that $|D_i\cap \alpha_j|=\delta_{ij}$ for all $1\leq i,j\leq n$, and in this case $D$ (as well as $\partial D$) is said to {\em dualize} $\alpha$.

\end{definition}

\begin{definition}
Suppose $\alpha_1$ and $\alpha_2$ are a disjoint pair of essential, non-isotopic simple closed curves embedded on an orientable surface $\Sigma$, and that $\beta$ is an arc embedded in $\Sigma$ which meets each $\alpha_i$ exactly once on an endpoint.  Then $\partial N(\alpha_1\cup\beta\cup\alpha_2)$ has exactly one component which is not isotopic to $\alpha_1$ or $\alpha_2$, call it $\alpha_1'$.  $\alpha_1'\cup \alpha_2$ (or just $\alpha_1'$) is said to be the result of a {\em slide} of $\alpha_1$ over $\alpha_2$ along $\beta$.
\end{definition}

If $\Sigma$ is a boundary component of a $3$-manifold $M$, and $\alpha_1$ and $\alpha_2$ bound disks $D_1$ and $D_2$ properly embedded in $M$, then $\alpha_1'$ also bounds a disk $D_1'$ in $M$.

\begin{definition}

Suppose $\alpha=\alpha_1\cup\cdots \cup \alpha_n$ is an indexed disjoint union of essential, pairwise non-isotopic simple closed curves embedded in $\Sigma$.  Then an {\em $(i,j)$-slide} of $\alpha$ is the indexed union of curves $\alpha_1\cup\cdots\cup\alpha_i'\cup \alpha_j\cup\cdots \cup \alpha_n$ obtained from by sliding $\alpha_i$ over $\alpha_j$ along a curve whose interior is disjoint from $\alpha$ 

\end{definition}

To be clear, in an $(i,j)$-slide we remove $\alpha_i$ from $\alpha$ and replace it with the new curve $\alpha_i'$ which we label with the same index.  The indexing of these curve collections is essential to the following definition.

\begin{definition}

Suppose $\gamma=\gamma_1\cup\cdots \cup \gamma_n\subset \Sigma$ is obtained from $\alpha=\alpha_1\cup\cdots \cup \alpha_n$ via a sequence of $(i,j)$ slides such that $1\leq j\leq k$ at every stage of the sequence.  Then $\gamma$ is said to be {\em slide equivalent} to $\alpha$ rel $\alpha_1\cup\cdots \cup \alpha_k$.

\end{definition}

A disjoint union $D$ of disks properly embedded in a handlebody $H$ is said to be {\em complete} if $E(D,H)$ is a ball.  The following lemma is well known, see \cite{Johannson}.  

\begin{lemma}

Any two complete collections of disks in a handlebody are slide equivalent.

\end{lemma}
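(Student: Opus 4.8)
Fix the genus $g$ of the handlebody $H$, and recall that a complete collection of disks in $H$ necessarily has exactly $g$ components, with essential, pairwise non-isotopic boundary curves, so that ``slide equivalent'' makes sense for it. Observe first that slide equivalence (allowing slides over arbitrary members of the collection) is an equivalence relation: reflexivity is trivial, transitivity is concatenation of slide sequences, and symmetry holds because the regular neighborhood $N(\alpha_1\cup\beta\cup\alpha_2)$ used to define a slide carries $\alpha_1$, $\alpha_2$ and the new curve $\alpha_1'$ symmetrically on its boundary, so a slide of $\alpha_1$ over $\alpha_2$ is undone by a slide of $\alpha_1'$ over $\alpha_2$. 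Given two complete collections $D=D_1\cup\cdots\cup D_g$ and $D'=D_1'\cup\cdots\cup D_g'$, isotope them so that $\partial D$ meets $\partial D'$ minimally on $\partial H$ and their interiors are transverse; using irreducibility of $H$, an innermost-disk swap across the ball bounded by $\delta\cup\delta'$ (where $\delta\subseteq D'$ is an innermost disk of $D\cap D'$ and $\delta'\subseteq D_i$ the subdisk it matches) removes all closed curves of $D\cap D'$ without disturbing $\partial D$, so we may assume $D\cap D'$ consists of arcs.

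The plan is then a double induction: an outer induction on $g$, and, for fixed $g$, an inner induction on $|\partial D\cap\partial D'|$ reducing to the disjoint case. For the inner step, suppose $\partial D\cap\partial D'\neq\emptyset$ and choose an arc $\gamma$ of $D\cap D'$ outermost among all such arcs on $D'$, say lying on $D_j'$ and cutting off a subdisk $\Delta\subseteq D_j'$ with $\mathrm{int}\,\Delta\cap D=\emptyset$; then $\gamma\subseteq D_i$ for some $i$ and $\partial\Delta=\gamma\cup a$ with $a\subset\partial D_j'\subset\partial H$. Surgering $D_i$ along $\Delta$ (cut $D_i$ along $\gamma$ and glue a parallel copy of $\Delta$ to each half) yields two disjoint disks $E,E'$, each disjoint from $D\setminus D_i$, and each meeting $\partial D'$ (after a small isotopy pushing $\partial E$ off $\partial D_j'$) in strictly fewer points than $\partial D_i$. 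Two facts finish the inner step: (a) at least one of $(D\setminus D_i)\cup E$ and $(D\setminus D_i)\cup E'$ is again a complete collection; and (b) this collection is slide equivalent to $D$. Fact (b) has a clean proof granting (a): if, say, $(D\setminus D_i)\cup E$ is complete, then cutting $H$ along the $g-1$ disks $D\setminus D_i$ yields a solid torus $W$ in which both $D_i$ and $E$ are meridian disks; these are unique up to isotopy in $W$, and an isotopy in $W$ lifts to an isotopy in $H$ that crosses $D\setminus D_i$ finitely often, each crossing being a handle slide over a component of $D\setminus D_i$. Hence $(D\setminus D_i)\cup E\sim D$, and the inner induction proceeds.

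When $\partial D\cap\partial D'=\emptyset$, the disjoint collection $D'$ lies in the ball $B=E(D,H)$, whose boundary sphere is the union of the $2g$ copies $D_i^{\pm}$ of the disks of $D$ and the $2g$-punctured sphere $\partial H\setminus N(\partial D)$. For each component $D_j'$ of $D'$, the curve $\partial D_j'$ bounds a disk in this sphere containing some nonempty, proper subset of the copies $\{D_i^{\pm}\}$; since $D_j'$ is essential, that subset, after cancelling any pair $\{D_i^{+},D_i^{-}\}$, determines $D_j'$ up to isotopy in $H$ as an iterated band sum of certain $D_i$'s, that is, as the result of a sequence of slides of one $D_i$ over others. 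Performing those slides on $D$ produces a complete collection, slide equivalent to $D$, that shares a component with $D'$; cutting $H$ along that common disk gives a genus-$(g-1)$ handlebody in which the two remaining collections are complete, and the outer induction applies (slides in the cut-open handlebody are slides in $H$ fixing the common disk). The genus $0$ and $1$ base cases are immediate, since a complete collection in a solid torus is a single meridian disk, unique up to isotopy.

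The step I expect to be the genuine obstacle is fact (a): that surgering a member of a complete collection along a disk disjoint from the rest of the collection again yields, for one of the two possible outcomes, a complete collection. Everything else — the innermost-disk and outermost-arc bookkeeping, the strict decrease of $|\partial D\cap\partial D'|$, and the reduction via (b) — is routine, and the disjoint base case is a matter of carefully reading off the combinatorics of how the feet $D_i^{\pm}$ sit in $\partial E(D,H)$. I would prove (a) by tracking the effect of the surgery on the ball $E(D,H)$: the surgering disk $\Delta$ becomes a boundary-parallel disk there, cutting off a ball whose other face is a disk in $\partial E(D,H)$, and one checks that exactly one of the two ways of running the surgery corresponds to pushing $D_i$ across that ball in a manner that keeps the complement a ball.
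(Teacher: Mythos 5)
The paper does not actually prove this lemma: it is stated as ``well known'' with a citation to Johannson, so there is no internal argument to compare against. Your proposal is a reconstruction of the standard proof from the literature, and its architecture is correct: remove circles of $D\cap D'$ by innermost disks using irreducibility, reduce $|\partial D\cap\partial D'|$ by outermost-disk surgeries that are realized by slides, and finish the disjoint case by reading off the scar pattern on $\partial E(D,H)$ and inducting on genus. The one place where you have left a genuine hole is exactly the place you flag, fact (a), and your proposed route through the ball $E(D,H)$ is more awkward than necessary. The clean argument is homological: since $D$ is complete, cutting $H$ along the $g-1$ disks $D\setminus D_i$ yields a solid torus $W$ (reglue the two scars of $D_i$ in the ball $E(D,H)$), in which $D_i$ is a meridian disk. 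The surgered disks $E,E'$ are properly embedded and disjoint in $W$, so each of $\partial E,\partial E'$ is either inessential in $\partial W$ or a meridian slope; and since $\partial D_i$, $\partial E$, $\partial E'$ cobound a pair of pants in $\partial W$ (a neighborhood of $\partial D_i\cup a$), one has $[\partial E]+[\partial E']=[\partial D_i]=[m]\neq 0$ in $H_1(\partial W)$, forcing exactly one of the two to be a meridian and the other to be inessential. The meridian one cuts $W$ into a ball, which is precisely the completeness of $(D\setminus D_i)\cup E$; this also feeds directly into your fact (b), since both $D_i$ and $E$ are then meridians of $W$ and meridian disks of a solid torus are unique up to isotopy.

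Two smaller points deserve a sentence each in a full write-up. First, the translation ``isotopy in the cut-open handlebody $=$ slide equivalence in $H$'' (used in both fact (b) and the genus induction) needs the observation that a generic isotopy of $\partial E$ in $\partial W$ crosses the scar disks in finitely many elementary moves, each of which is a band sum with the corresponding $\partial D_k$, and that the slide arcs can be pushed off the remaining curves of the collection. Second, in the disjoint case your appeal to ``cancelling'' pairs $\{D_i^+,D_i^-\}$ should be backed by the remark that no disk in a complete collection is separating (cutting along a separating disk disconnects, and further cutting cannot reconnect), so the signed scar set of each $D_j'$ is nonempty and the iterated band-sum description applies. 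With fact (a) proved as above, the proposal is a correct and complete proof of a statement the paper only cites.
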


In our proof of the main lemma of this section, we shall frequently perform the following operation.

\begin{definition}

Let $D$ be a disk properly embedded in a $3$-manifold $M$ (so $\partial D\subset \partial M$), and let $\delta$ be a disk (non-properly) embedded in $M$ such that $\delta\cap D\subset\partial \delta$ is an arc properly embedded in $D$ and $\delta\cap \partial M\subset \partial \delta$ is the complementary arc $\overline{\partial \delta-D}$. Then $Fr(D\cup\delta,M)$ consists of three disks, one of which is isotopic to $D$.  The other two components of $Fr(D\cup\delta,M)$ are said to be the result of an {\em outermost disk surgery} of $D$ along $\delta$.

Similarly, if $A$ is an annulus properly embedded in $M$ and $\delta$ is a disk embedded in $M$ such that $\delta\cap A\subset \partial \delta$ is a non-spanning arc in $A$, and $\overline{\partial \delta-A}=\delta\cap \partial M$, then $Fr(A\cup\delta,M)$ consists of two annuli and one disk.  One annulus will be isotopic to $A$, the other may not be, and is said to be the result of an outermost disk surgery on $A$ along $\delta$.  

\end{definition}

The following lemma is a translation of a special case of the main result of \cite{Scharlemann-Thompson} into the language of handle slides.  Our proof involves the notion of a Heegaard splitting of a $3$-manifold with non-empty boundary, together with an application of the extended version of Haken's lemma appearing in \cite{Casson-Gordon}.  The unfamiliar reader is referred to \cite{Scharlemann} for an account of these concepts.

\begin{lemma}

Suppose $\alpha=\alpha_1\cup\cdots\cup \alpha_g$ is a Heegaard diagram of $S^3$ on a handlebody $H$, and that $H(\alpha')$ is a handlebody, where $\alpha'=\alpha_1\cup\cdots \cup \alpha_k$.  If $\alpha-\alpha'$ is dualized in $H(\alpha')$, then $\alpha$ is slide equivalent rel $\alpha'$ to a dualized diagram on $H$.

\end{lemma}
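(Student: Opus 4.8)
The plan is to reinterpret the statement as a fact about a Heegaard splitting of the handlebody $H'':=H(\alpha')$, and then to apply the relevant case of the standardization theorem of Scharlemann--Thompson, whose proof is powered by the extended version of Haken's lemma from \cite{Casson-Gordon}.

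Two preliminaries. An Euler characteristic count gives that $H''$ has genus $g-k$, and the disks $D=D_1\cup\cdots\cup D_{g-k}$ dualizing $\beta:=\alpha-\alpha'=\alpha_{k+1}\cup\cdots\cup\alpha_g$ in $H''$ in fact form a \emph{complete} disk system of $H''$: cutting $H''$ successively along $D_1,D_2,\dots$, each $D_j$ stays non-separating since it meets $\alpha_{k+j}$ once while $\alpha_{k+j}$ is disjoint from the earlier cutting disks, so after $g-k$ cuts one reaches a ball. Next, $H''=H(\alpha')$ is the total space of a Heegaard splitting $H''=H\cup_\Sigma C$, where $\Sigma=\partial H$ and $C=(\Sigma\times I)\cup(\text{$2$-handles along }\alpha'\times\{1\})$; a second $\chi$ count shows $C$ is a compression body with $\partial_+C=\Sigma$ of genus $g$, $\partial_-C=\partial H''$ of genus $g-k$, and with $\alpha'$ a complete meridian system. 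Under this dictionary, sliding a $\beta$-curve over some $\alpha_j$ with $j\le k$ is sliding over a meridian of $C$, so ``slide equivalence rel $\alpha'$'' means exactly: slide equivalence in which no $\beta$-curve is ever the curve slid \emph{over}.

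For the main argument: $H''$ is a handlebody, so $\partial H''$ is compressible, so by the extended Haken lemma (\cite{Casson-Gordon}) the splitting $H\cup_\Sigma C$ is $\partial$-reducible; iterating $\partial$-reductions is precisely the proof --- a special case of the main result of \cite{Scharlemann-Thompson} --- that a Heegaard splitting of a handlebody is standard, hence, as $H''$ has Heegaard genus $g-k$ and our splitting has genus $g$, that $H\cup_\Sigma C$ is the $k$-fold stabilization of the trivial genus-$(g-k)$ splitting of $H''$. I would carry $\alpha,\beta,D$ through this standardization, arranging that each step is an ambient isotopy, a slide among $\alpha'$-curves, or a slide of a $\beta$-curve over an $\alpha'$-curve performed locally near one trivial handle (so as not to touch $D$, which is disjoint from the meridians of $C$ once everything is standard --- so that such a slide changes $|D_i\cap\beta_j|$ by a multiple of $0$). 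In the resulting standard picture $H$ is a genus-$(g-k)$ handlebody $M'$ with $k$ trivial $1$-handles attached, having cocore disks $F_1,\dots,F_k$ dual to the standard meridians $\alpha_1^{*},\dots,\alpha_k^{*}$ of $C$; meanwhile $D$, pushed off the collar of $C$, lies in $M'\cong H''$ and still dualizes the $\beta$-curves, now slid clear of the trivial handles. Then $\{F_1,\dots,F_k\}\cup\{D_1,\dots,D_{g-k}\}$ is a complete disk system of $H$ exhibiting $\{\alpha_1^{*},\dots,\alpha_k^{*},\beta_1,\dots,\beta_{g-k}\}$ as dualized on $H$, and this family of curves is slide equivalent to $\alpha$ rel $\alpha'$, as wanted.

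I expect the crux to be making the standardization respect ``rel $\alpha'$'': the extended Haken lemma hands us a $\partial$-reducing disk, but one must use the hypothesis that $\beta$ is already dualized in $H''$ to see that each resulting simplification of $H\cup_\Sigma C$ can be done without ever sliding over a $\beta$-curve and without losing the dual disks $D$. The rest is routine: stripping off trivial $2$-handles when some $\alpha_j$ ($j\le k$) already bounds a disk in $H$, and using the compression-body version of the slide-equivalence lemma quoted above to normalize the meridian system $\alpha'$ of $C$ by slides internal to $\alpha'$.
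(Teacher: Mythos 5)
Your setup coincides with the paper's: the splitting $H\cup_\Sigma C$ of $H'':=H(\alpha')$ with $C$ the compression body built from a collar of $\Sigma$ and the $2$-handles over $\alpha'$ is exactly the splitting $(V,W,\Sigma)$ used there, the extended Haken lemma of Casson--Gordon is invoked in the same role, and your closing step (normalizing the meridian system of $C$ by slides internal to $\alpha'$, with the cocore disks $F_i$ completing the dual system) matches the paper's final appeal to Waldhausen's theorem and Lemma 2.6. The completeness of $D$ in $H''$ and the genus counts are correct.

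However, there is a genuine gap, and you have named it yourself: the passage you defer as ``the crux'' is the entire technical content of the lemma. The paper does not standardize the splitting $H\cup_\Sigma C$ and then try to carry $\alpha$, $\beta$, $D$ along; it keeps the splitting fixed, uses Haken only to arrange that each $D_i$ meets $\Sigma$ in a single circle, and then analyzes the arcs of intersection of $B''=D\cap C$ with the core disks $E'$ of the $2$-handles and the vertical annuli $A''=\alpha''\times I$. The key point --- occupying most of the written proof --- is that each outermost disk of $B''$ determines an outermost disk surgery on $E'\cup A''$ which is realized on $\Sigma$ as a slide of a single $\alpha$-curve over $\alpha'$ (with a case division, and a choice of which surgered disk to keep, when the outermost disk meets a core disk $E_j$ rather than an annulus $A_j$). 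Your plan instead asserts that each step of an ambient-isotopy standardization of the splitting surface ``can be arranged'' to be such a slide ``performed locally near one trivial handle so as not to touch $D$''; but that local, $D$-avoiding picture is only available \emph{after} the configuration has been standardized, so the argument presupposes its conclusion. Without the outermost-disk-surgery analysis (or an equivalent mechanism translating the isotopy of $D$ across $C$ into slides of $\alpha$ over $\alpha'$ only), the proof is not complete.
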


\begin{proof}

Set $M'=H(\alpha')$.  Let $\alpha''=\alpha-\alpha'$, and let $\Delta'$ denote the union of the $2$-handles we have attached to $H$ along $\alpha'$, which appear as embedded solid tubes in $M'$. Let $D''=D_{k+1}\cup\cdots \cup D_g$ be the complete collection of compressing disks for $M'$ which dualize $\alpha''$.

After a slight isotopy of $D''$ in $M'$ we can ensure that $\partial D''$ is disjoint from $\Delta'\cap \partial M'$.  Once this is done, we may choose a small collar $J$ of $\partial M'$ disjoint from $\alpha'$, and a parameterization map $h:\partial M'\times I\rightarrow J$ such that:

\begin{itemize}

\item $h^{-1}(\Delta'\cap J)=(\Delta'\cap \partial M')\times I,$

\item $h^{-1}(D''\cap J)=(D''\cap \partial M')\times I$.

\end{itemize}  

Let $V=\overline{H-J}$, let $W=J\cup\Delta'$, and let $\Sigma=\partial V=\partial W$.  Then the triple $(V,W,\Sigma)$ forms a Heegaard splitting of $M'$ (see \cite{Scharlemann} for a definition of a Heegaard splitting of a $3$-manifold with boundary), so by a natural extension of Haken's lemma \cite{Casson-Gordon}, $D''$ can be isotoped, leaving $\partial D''$ fixed, so that $D_i\cap \Sigma$ is a single simple closed curve for all $k<i\leq g$.

For the remainder of the proof we shall identify the curves on $\partial H$ with their projection onto $\Sigma$ along the $I$-fibers of $J$ (note that $\Sigma-J$ already coincides with $\partial H-J$).  Beware, however, that after the isotopy of $D''$ in the previous paragraph, the projection of $\partial D''$ onto $\Sigma$ along the $I$-fibers of $J$ may differ from $D''\cap \Sigma$.  In fact, except in the simplest cases, the latter will intersect $\alpha$ many times, in a way that cannot be removed via isotopy on $\Sigma$.  

We get control of the intersections as follows.  Let $A''=A_{k+1}\cup \cdots \cup A_g=\alpha''\times I\subset J$, and let $B''=B_{k+1}\cup \cdots \cup B_g=D''\cap W$.  $A''$ and $B''$ are both disjoint, incompressible unions of annuli.  Let $E'$ denote the collection of disks in $W$ with boundary $\alpha'$.  In other words, $E'$ is just the union of the core disks of $2$-handles $\Delta'$.

Using a standard innermost disk argument we can eliminate all circles of intersection in $B''\cap (E'\cup A'')$.  Also, by construction $B_i\cap A_j$ meets $\partial M'$ in one point if $i=j$, and never otherwise.  Thus we may assume that, aside from a single spanning arc in $B_i\cap A_i$ for each $i$, $B''\cap (E'\cup A'')$ consists only of non-spanning arcs in $W$ which have both endpoints in $\Sigma$.  

We will show that the non-spanning arcs of $B''\cap (E'\cup A'')$ can be eliminated via a sequence of outermost disk surgeries on $E'\cup A''$, and that these disk surgeries correspond to a sequence of handleslides of $\alpha$ rel $\alpha'$ (under our identification of $\partial H$ with $\Sigma$, $\alpha=(E'\cup A'')\cap \Sigma$).

If there are any non-spanning arcs in $B''\cap (E'\cup A'')$ then there will be an ``outermost'' disk $\delta\subset B''$ such that $\delta\cap (E'\cup A'')=\overline{\partial \delta-\partial B}$ is a connected subarc of $\partial \delta$.  The complementary subarc in $\partial\delta$, which coincides with a subarc $\omega$ of $\partial B''$, will lie on $\Sigma$.  Both endpoints of the arc $\omega$ will lie on a single component of $\alpha$. 

Consider first the case that both endpoints of $\omega$ lie on a component $\alpha_j$ of $\alpha''$. Then since $\delta$ is disjoint from $E'$, and the $2$-handles $\Delta'$ are just a regular neighborhood of $E'$ in $W$, after a slight isotopy we may assume $\delta$ lies inside the collar $J$ of $\partial M'$. Thus $\delta$ must be $\partial$-parallel, and it follows that $\omega$ cobounds a disk $F$ with a subarc $a$ of $\alpha_j$ in the component $\Sigma'$ of $\partial J$ which partially coincides with $\Sigma$.  The interior of $F$ may contain some of the disk ``scars'' $\Delta\cap \Sigma'$, but it will be disjoint from $\alpha''-\alpha_j$.  Therefore, the curve $(\alpha_j-a)\cup\omega$ is obtained from $\alpha_j$ via a sequence of handle slides over $\alpha'$.  And $(\alpha_j-a)\cup \omega$ is the boundary of the annulus obtained from $A_j$ via a disk surgery along $\delta$.

If, instead, $\delta$ meets a component $E_j$ of $E'$, then essentially the same argument shows that a disk surgery of $E_j$ along $\delta$ corresponds to a slide of $\alpha_j=\partial E_j$ over $\alpha'$. However there is one additional detail to consider because, unlike the first case, we will have two distinct choices of disk surgery to consider, and the choice we make matters.

Specifically, as in the first case, we may assume that $\delta$ essentially lies in $J$ (a small subdisk of it will lie in the component $N(E_j,W)$ of $\Delta'$).  If $G$ and $G'$ denote the ``scar'' disks $N(E_j,W)\cap \Sigma'$, then $\omega$ intersects $\Sigma'$ in a single arc which meets $G$, say, and cobounds a disk $F\subset \Sigma'$ with a subarc in $\partial G$ such $\mathring{F}\cap\mathring{G}=\emptyset$.  This follows from the fact that $\delta\cap J$ must be $\partial$-parallel.

One disk that results from a surgery of $E_j$ along $\delta$ can be isotoped onto $F$ in $W$, while the other can be isotoped onto $F\cup G$.  If the other ``scar'' disk $G'$ lies in $F$, perform the former disk surgery, if $G'\cap F=\emptyset$, perform the latter surgery.  In the former case, $\partial G'$ can be slid over $\alpha'$ onto $\partial F$ in $\Sigma$, and in the latter case, $\partial G$ can be slid over $\alpha'$ onto $\partial (F\cup G)$ in $\Sigma$.  In both cases this corresponds to a sequence of slides of $\alpha_j$ over $\alpha'$ on $\Sigma$ as required.

This same argument may be repeated (for convenience, continue to call the new collection of disks and annuli $E'\cup A''$) until  all non-spanning arcs of $B''\cap(E'\cup A'')$ are eliminated.  Let $\gamma'=\partial E'$ and $\gamma''=\partial A''\cap \Sigma$ be the set of curves that result.  Then $\gamma''$ is dualized by $D''$, $\gamma'$ is disjoint from $D''$, and $\gamma$ is equivalent to $\alpha$ rel $\alpha'$.

It follows that $H(\gamma'')$ is a handlebody and, moreover, that there is a ball $K$ trivially embedded in $H(\gamma'')$ so that $K\cap \partial H(\gamma'')$ is a disk on $\partial K$, $K\cap \gamma'=\emptyset$, and $\Delta''\subset K$, where $\Delta''$ is the union of the $2$-handles attached to $H$ along $\gamma''$.  Theorem 1.13 and Lemma 2.6 then tell us that the curves $\gamma'$ can be slid over themselves in $\partial H(\gamma'')$ (along arcs disjoint from $K$) to a collection of curves (call it $\gamma'$ also) which is dualized by a complete collection of disks $D'$ of $H(\gamma'')$, and we of course may isotope $D'$ away from $K$ if necessary.  The final result is then dualized on $H$, and is slide equivalent to $\alpha$ rel $\alpha'$, as required.

\end{proof}

\section{The classification of $(g,g,k_1,k_2)$- and $(g,g-1,k,0)$-trisections}

Given a genus $g$ $3$-handlebody $H$, a {\em complete disk collection} $D=D_1\cup\cdots \cup D_g$ for $H$ is a disjoint union of properly embedded disks in $H$ such that $E(D,H)$ is a ball.  The union of curves $\partial D\subset \partial H$ is then said to be a {\em defining set of curves} for the handlebody $H$ on $\partial H$.  Here is a slight variation of Definition 2.2 for Heegaard splittings, which also extends in the obvious way to GK-trisections.

\begin{definition}

A {\em Heegaard diagram} for a Heegaard splitting $(H_\alpha,H_\beta,\Sigma)$ is a pair $(\alpha,\beta)$ where $\alpha$ and $\beta$ are defining sets of curves for $H_\alpha$ and $H_\beta$ on $\Sigma$.  Similarly, a {\em trisection diagram} for a given trisection tripod $H_\alpha\cup H_\beta\cup H_\gamma$ is a triple $(\alpha,\beta,\gamma)$ where $\alpha$, $\beta$, and $\gamma$, are defining sets of curves embedded in $\Sigma$ for $H_\alpha$, $H_\beta$, and $H_\gamma$, respectively.

\end{definition}

For the rest of this paper, we will use the following conventions for our indices:  If the tripod $H_\alpha\cup H_\beta\cup H_\gamma$ defines a $(g,k_1,k_2,k_3)$ trisection with trisection diagram $(\alpha,\beta,\gamma)$, then:

\begin{itemize}

\item $(H_\alpha,H_\beta,\Sigma)$ is a genus $g$ Heegaard splitting of $\#^{k_1}S^1\times S^2$
\item $(H_\alpha,H_\gamma,\Sigma)$ is a genus $g$ Heegaard splitting of $\#^{k_2}S^1\times S^2$
\item $(H_\beta,H_\gamma,\Sigma)$ is a genus $g$ Heegaard splitting of $\#^{k_3}S^1\times S^2$.

\end{itemize}

\begin{definition}

A Heegaard diagram $(\alpha,\beta)$ on a genus $g$ surface is said to be {\em $g,k$-standard} if the $\alpha$ and $\beta$ curves can be indexed so that:

\begin{itemize}

\item $\alpha_i=\beta_i$ for all $1\leq i\leq k$.

\item $|\alpha_i\cap \beta_j|=\delta_{ij}$ for all $i>k$.

\end{itemize}

\end{definition}

A $g,k$-standard Heegaard diagram defines the genus $g$ Heegaard splitting of $\#^kS^1\times S^2$.

\begin{definition}

A trisection diagram $(\alpha,\beta,\gamma)$ is {\em standard} if each of the Heegaard diagrams $(\alpha,\beta)$, $(\alpha,\gamma)$ and $(\beta,\gamma)$ is standard.

\end{definition}

The following proposition is an easy consequence of Theorem 1.13.

\begin{proposition}
Every $(g,g,k_1,k_2)$ trisection satisfies $k_1=k_2$ and can be represented by a standard trisection diagram.
\end{proposition}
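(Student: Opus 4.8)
The plan is to start from an arbitrary $(g,g,k_1,k_2)$-trisection with tripod $H_\alpha\cup H_\beta\cup H_\gamma$ and a trisection diagram $(\alpha,\beta,\gamma)$, and to exploit the fact that $g=k_1$ forces the Heegaard splitting $(H_\alpha,H_\beta,\Sigma)$ of $\#^{k_1}S^1\times S^2$ to be maximally stabilized in the sense that it has the largest possible first Betti number for its genus. Concretely, since $(H_\alpha,H_\beta,\Sigma)$ is a genus $g$ splitting of $\#^g S^1\times S^2$, Theorem 1.13 (Waldhausen) tells us it is isotopic to the standard genus $g$ splitting of $\#^g S^1\times S^2$; but in that splitting the $\alpha$ and $\beta$ curves coincide, $\alpha_i=\beta_i$ for all $i$. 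So first I would apply Waldhausen's theorem to put $(\alpha,\beta)$ into the form $\alpha_i=\beta_i$ for all $1\le i\le g$ after an isotopy of $\Sigma$ carrying $H_\alpha$ and $H_\beta$ to the standard handlebodies. Having done this, $\gamma$ is now just some defining set of curves on $\Sigma$ for a third handlebody $H_\gamma$, dragged along by the same isotopy.

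Next I would bring in the second constraint. The pair $(H_\alpha,H_\gamma,\Sigma)$ is a genus $g$ Heegaard splitting of $\#^{k_2}S^1\times S^2$, and similarly $(H_\beta,H_\gamma,\Sigma)$ is a genus $g$ splitting of $\#^{k_3}S^1\times S^2$. But $\alpha=\beta$ as curve systems on $\Sigma$, hence $H_\alpha=H_\beta$ (they are glued to $\Sigma$ the same way), so $k_2=k_3=:k$; this is the equality $k_1=k_2$ asserted in the proposition (with the trivial relabelling that in the $(g,g,k_1,k_2)$ notation the two ``small'' indices are $k_1$ and $k_2$, matching $k_2=k_3$ here). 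Now I need to normalize the pair $(\alpha,\gamma)$. Again $(H_\alpha,H_\gamma,\Sigma)$ is a genus $g$ splitting of $\#^k S^1\times S^2$, so by Waldhausen's theorem it is isotopic to the standard one. The delicate point is that this isotopy must be performed \emph{rel} the structure already fixed, i.e.\ without disturbing the $\alpha=\beta$ normalization. The standard genus $g$ splitting of $\#^k S^1\times S^2$ relative to a fixed handlebody $H_\alpha$ admits a complete disk system $\gamma$ with $\gamma_i=\alpha_i$ for $i\le k$ and $|\gamma_i\cap\alpha_j|=\delta_{ij}$ for $i>k$; so I would like to realize $(\alpha,\gamma)$ in exactly this position via a diffeomorphism of $\Sigma$ preserving $H_\alpha$. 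Since Waldhausen's uniqueness is up to isotopy of the ambient manifold carrying both handlebodies to standard position, and $H_\alpha$ is already standard, the resulting self-diffeomorphism of $\#^g S^1\times S^2$ restricts to a diffeomorphism of $H_\alpha$ fixing $\Sigma$ setwise, hence it carries the $\beta=\alpha$ system to a defining system for $H_\alpha$ again; after a further handle-slide normalization inside $H_\alpha$ (Lemma 2.6, any two complete disk systems of a handlebody are slide equivalent) I can restore $\beta=\alpha$ while $\gamma$ is now $g,k$-standard against $\alpha$. The upshot is that all three of $(\alpha,\beta)$, $(\alpha,\gamma)$, $(\beta,\gamma)$ are simultaneously standard, so $(\alpha,\beta,\gamma)$ is a standard trisection diagram by Definition 3.3.

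The step I expect to be the main obstacle is precisely the ``rel'' argument in the previous paragraph: Waldhausen's theorem gives uniqueness of each Heegaard splitting \emph{separately}, but in a trisection diagram one must normalize the pair $(\alpha,\gamma)$ without destroying the normalization $(\alpha,\beta)$ that was achieved first. The key observation making this work is that $H_\alpha$ is a common handlebody of both splittings and is already standard; thus the isotopy normalizing $(H_\alpha,H_\gamma)$ can be chosen to preserve $H_\alpha$, and its effect on $\beta$ is then only to replace one complete disk system of $H_\alpha$ by another, which is absorbed by the handle-slide flexibility of Lemma 2.6. Everything else is a routine bookkeeping of curve systems and the arithmetic relation $k_1=k_2$. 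A careful write-up should also note the degenerate cases $k=0$ (where $\#^0 S^1\times S^2=S^3$ and the splitting is the standard genus $g$ splitting of $S^3$, handled by the $k=0$ instance of Theorem 1.13) and $g=k$ (where the trisection is forced to be $\St^{g,0,0}$ up to the stated relabelling), so that the statement holds uniformly for all $0\le k\le g$.
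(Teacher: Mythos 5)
Your argument is correct and rests on the same two ingredients as the paper's own proof: Waldhausen's theorem and the observation that a genus $g$ splitting of $\#^{g}S^1\times S^2$ forces $H_\alpha$ and $H_\beta$ to share every defining curve system, so that $\beta$ may simply be taken equal to $\alpha$. The paper performs the normalizations in the opposite order --- first making $(\alpha,\gamma)$ standard via Waldhausen, and only then setting $\beta:=\alpha$ --- which makes the ``rel'' difficulty you flag disappear entirely, since the choice of $\beta$ is made last and disturbs nothing.
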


\begin{proof}

Suppose that $(\alpha,\beta,\gamma)$ is a diagram for our $(g,g,k_1,k_2)$-trisection.  With our conventions, $(H_\alpha,H_\gamma,\Sigma)$ is the genus $g$ Heegaard splitting of $\#^{k_1}S^1\times S^2$, so in fact we may rechoose $(\alpha,\gamma)$ to be a standard genus $g$ diagram of $\#^{k_1}S^1\times S^2$.  Since the handlebodies $H_\alpha$ and $H_\beta$ form a genus $g$ Heegaard splitting of $\#^{g}S^1\times S^2$, Waldhausen's theorem implies we must have $H_\alpha=H_\beta$.  Hence every defining set of curves for $H_\alpha$ is also a defining set of curves for $H_\beta$, so we can choose $\alpha=\beta$.  With these choices $(\alpha,\beta,\gamma)$ is standard and $k_1=k_2$.

\end{proof}

It follows that the only $(g,g,0,0)$-trisection of any kind is the trisection $\St^{g,0,0}$ of $S^4$.  Likewise, the only $(g,g,k,k)$-trisection of any kind is $(\#^k\mathcal{G})\#\St^{g-k,0,0}$, where $\mathcal{G}$ is the standard $(1,1,1,1)$-trisection of $S^1\times S^3$.

\begin{lemma}

Suppose $H_\alpha\cup H_\beta\cup H_\gamma$ is a $(g,k_1,k_2,0)$-trisection tripod, and that $(\alpha,\beta)$ is a $g,k_1$-standard diagram of $(H_\alpha,H_\beta,\Sigma)$, so that $\alpha_i=\beta_i$ for all $1\leq i\leq k_1$. Then if $M=H_\gamma(\beta_1\cup\cdots \cup \beta_{k_1})$ is a handlebody and $\beta_{k_1+1}\cup\cdots \cup \beta_g\subset \partial M$ is dualized in $M$, then the tripod $H_\alpha\cup H_\beta\cup H_\gamma$ admits a diagram $(\alpha,\beta,\gamma)$ such that $(\alpha,\beta)$ and $(\beta,\gamma)$ are standard. 

\end{lemma}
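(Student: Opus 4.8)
The plan is to use Lemma 2.8 to manufacture the $\gamma$ curves out of the fact that $(H_\beta,H_\gamma,\Sigma)$ is a Heegaard splitting of $S^3$, and then to use Theorem 1.13 to repair the $(\alpha,\beta)$ side. Since $k_3=0$, the triple $(H_\beta,H_\gamma,\Sigma)$ is a genus $g$ Heegaard splitting of $S^3$, so attaching $2$-handles to $H_\gamma$ along the defining curves $\beta=\beta_1\cup\cdots\cup\beta_g$ of $H_\beta$ (which, in $H_\beta$, bound the disks of a complete disk collection) yields a ball: the curves $\beta$ form a Heegaard diagram of $S^3$ on the handlebody $H_\gamma$, with distinguished sub-collection $\beta'=\beta_1\cup\cdots\cup\beta_{k_1}$. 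The hypotheses that $M=H_\gamma(\beta')$ is a handlebody and that $\beta''=\beta_{k_1+1}\cup\cdots\cup\beta_g$ is dualized in $M$ are exactly the hypotheses of Lemma 2.8, with $H=H_\gamma$, diagram $\beta$, and $k=k_1$. Lemma 2.8 then produces a collection $\widetilde\beta=\widetilde\beta_1\cup\cdots\cup\widetilde\beta_g$, obtained from $\beta$ by a sequence of $(i,j)$-slides with $1\le j\le k_1$ at every stage, which is dualized on $H_\gamma$; fixing a complete disk collection $D_\gamma$ of $H_\gamma$ with $|D_{\gamma,i}\cap\widetilde\beta_j|=\delta_{ij}$ and setting $\gamma:=\partial D_\gamma$, we see (handle slides of a defining set of a handlebody again give one) that $\widetilde\beta$ is a defining set of $H_\beta$, $\gamma$ is a defining set of $H_\gamma$, and $(\widetilde\beta,\gamma)$ is a $g,0$-standard — hence standard — diagram of $(H_\beta,H_\gamma,\Sigma)$.

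Next I would check that $\widetilde\beta_1,\ldots,\widetilde\beta_{k_1}$ still bound disjoint disks forming part of a complete disk collection of $H_\alpha$. This is an induction along the slide sequence: initially $\beta_i=\alpha_i$ for $i\le k_1$, which do so; and each $(i,j)$-slide has $j\le k_1$, so it either leaves the first $k_1$ curves fixed (when $i>k_1$) or, when $i,j\le k_1$, replaces $\beta_i$ by the result of sliding a curve bounding a disk in $H_\alpha$ over another such curve, which again bounds a disk in $H_\alpha$ by the remark following Definition 2.3, keeping the first $k_1$ curves part of a complete disk collection of $H_\alpha$. Put $\widetilde\alpha_i:=\widetilde\beta_i$ for $1\le i\le k_1$; it remains to choose defining curves $\widetilde\alpha_{k_1+1},\ldots,\widetilde\alpha_g$ of $H_\alpha$, disjoint from $\widetilde\beta_1\cup\cdots\cup\widetilde\beta_{k_1}$, completing the $\widetilde\alpha_i$ to a defining set $\widetilde\alpha$ of $H_\alpha$ with $|\widetilde\alpha_i\cap\widetilde\beta_j|=\delta_{ij}$ for $i>k_1$; then $(\widetilde\alpha,\widetilde\beta)$ is $g,k_1$-standard and $(\widetilde\alpha,\widetilde\beta,\gamma)$ is the desired trisection diagram. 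Letting $\bar H_\alpha$, $\bar H_\beta$ be $H_\alpha$, $H_\beta$ cut along the disks bounded by $\widetilde\beta_1,\ldots,\widetilde\beta_{k_1}$, and $\bar\Sigma$ the surface $\Sigma$ cut along these curves and capped off, this amounts to asking that $\widetilde\beta'':=\widetilde\beta_{k_1+1}\cup\cdots\cup\widetilde\beta_g$ — a defining set of $\bar H_\beta$ on $\bar\Sigma$ — be dualized in $\bar H_\alpha$ by a defining set. Now $(\bar H_\alpha,\bar H_\beta,\bar\Sigma)$ is a genus $g-k_1$ Heegaard splitting of the manifold obtained from $\#^{k_1}S^1\times S^2=H_\alpha\cup H_\beta$ by surgering the $k_1$ disjoint $2$-spheres $S_i$ formed by the two disks bounded by $\widetilde\beta_i$ in $H_\alpha$ and in $H_\beta$; as the classes $[S_i]$ form a basis of $H_2$ (true for the original $g,k_1$-standard diagram, and each admissible slide either fixes them or performs an elementary operation on them), this manifold is $S^3$. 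By Theorem 1.13 the splitting $(\bar H_\alpha,\bar H_\beta,\bar\Sigma)$ is the standard genus $g-k_1$ splitting of $S^3$, so (using Lemma 2.6 and handle slides as in the first step) every defining set of $\bar H_\beta$ is dualized by a defining set of $\bar H_\alpha$; in particular $\widetilde\beta''$ is, which produces $\widetilde\alpha_{k_1+1},\ldots,\widetilde\alpha_g$. When $g-k_1\le 1$ — the only case needed for Theorem 3.6 — this last step is immediate, since $\bar H_\alpha$ is then a ball or a solid torus.

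The main obstacle is exactly this reconciliation of the two Heegaard splittings. Lemma 2.8 forces the $\gamma$ curves only at the price of replacing $\beta$ by $\widetilde\beta$, and the entire purpose of performing the slides rel $\beta'=\beta_1\cup\cdots\cup\beta_{k_1}$ — together with the hypothesis $\alpha_i=\beta_i$ for $i\le k_1$ — is to make that replacement invisible to the $(H_\alpha,H_\beta)$ Heegaard splitting, i.e.\ to keep $\widetilde\beta$ the $\beta$-part of a $g,k_1$-standard diagram. Establishing this (equivalently, that $\widetilde\beta''$ is dualized in $\bar H_\alpha$) is the delicate point of the argument, and it is where Waldhausen's theorem is invoked a second time.
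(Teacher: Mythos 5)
Your first step---feeding $\beta$ with distinguished sub-collection $\beta'=\beta_1\cup\cdots\cup\beta_{k_1}$ into Lemma 2.8 to obtain a slide-equivalent collection $\widetilde\beta$ that is dualized on $H_\gamma$, hence a standard pair $(\widetilde\beta,\gamma)$---is exactly the paper's first step. Where you diverge is in how the $\alpha$ side is repaired. The paper never lets $(\alpha,\beta)$ degenerate: each time Lemma 2.8 calls for a slide of $\beta_i$ over $\beta_j$ ($j\leq k_1$) along an arc $s$, it first slides the offending $\alpha$ curves over $\alpha_j=\beta_j$ along terminal subarcs of $s$ until $s$ is clear of $\alpha$, and only then performs the $\beta$-slide; since the interior of $s$ misses $\beta$ and the push-off of $\beta_j$ misses $\beta$, none of this changes $\alpha\cap\beta$, so the diagram $(\alpha,\beta)$ stays $g,k_1$-standard throughout. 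You instead run all the $\beta$-slides first and try to rebuild $\alpha$ afterwards.

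That rebuilding step contains a genuine gap. You reduce to the claim that, for the standard genus $g-k_1$ Heegaard splitting $(\bar H_\alpha,\bar H_\beta,\bar\Sigma)$ of $S^3$, \emph{every} defining set of $\bar H_\beta$ is dualized by a defining set of $\bar H_\alpha$, and you attribute this to Theorem 1.13 plus Lemma 2.6. Waldhausen gives you that the splitting is standard, i.e.\ that \emph{some} pair of defining sets is dual; Lemma 2.6 gives you that your $\widetilde\beta''$ is slide-equivalent to such a set; neither gives you that $\widetilde\beta''$ itself is dualized, and in fact the general claim is false. Take $K$ a nontrivial tunnel-number-one knot with tunnel $t$, so that $U=E(K\cup t)$ and $V=N(K\cup t)$ form the standard genus $2$ splitting of $S^3$, and take the defining set $\{\mu_K,\mu_t\}$ of $V$ consisting of the meridians of $K$ and of $t$. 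A dualizing defining set of $U$ would contain a curve $d$ bounding a disk in $U$, disjoint from $\mu_t$, and meeting $\mu_K$ once; such a curve survives in $U(\mu_t)=E(K)$ as an essential curve on the torus $\partial E(K)$ bounding a disk in $E(K)$, forcing $K$ to be unknotted. (This is precisely why the Scharlemann--Thompson result underlying Lemma 2.8 concludes only slide-equivalence to a dualized diagram, not dualization on the nose.) You correctly flag this as the delicate point and correctly observe that it is harmless when $g-k_1\leq 1$, which covers the applications in Theorem 3.6; but as a proof of Lemma 3.5 as stated, the argument does not close. To fix it along your lines you would need to use the specific provenance of $\widetilde\beta''$ (it comes from a dualized $\beta''$ by slides over the common curves), which is in effect what the paper's interleaved slide argument does.
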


\begin{proof}

By Lemma 2.8, there is a sequence of handleslides of $\beta$ rel $\beta'=\beta_1\cup\cdots\cup\beta_k$ to a dualized collection of curves on $H_\gamma$, which means the resulting set of curves will be standard with respect to some defining set of curves for $H_\gamma$.  But this sequence of slides also defines a sequence of slides of the $\alpha$ curves rel $\alpha'=\alpha_1\cup\cdots\cup\alpha_k$ which keeps $\alpha$ standard with respect to $\beta$, as follows.

Suppose $s$ is an arc connecting $\beta_i$ to $\beta_j$, $j\leq k$, which defines a slide of the $\beta$ curves.  Since $j\leq k$, $\beta_j=\alpha_j$, so one endpoint of $s$ lies on $\alpha_j$.  It is possible that $s$ does not meet $\alpha$ anywhere else, in which case there is nothing to do. Otherwise, let $p$ be the last point of $\alpha$ that $s$ meets before it reaches $\alpha_j$, and let $s'$ be the subarc of $s$ with endpoints $p$ and $s\cap \alpha_j$. If $p$ lies on $\alpha_r$, $s'$ defines a handleslide of $\alpha_r$ over $\alpha_j$.  This slide reduces the intersection of $\alpha$ with $s$ and leaves $\alpha\cap \beta$ unchanged.  Thus we may perform these slides as necessary until $\alpha$ is disjoint from $s$ (except at $\alpha_j$), and then perform the slide of $\beta_i$ over $\beta_j$ along $s$, all without changing $\beta\cap \alpha$.

\end{proof}

\begin{theorem}

All $(g,g-1,k,0)$-trisections are standard.

\end{theorem}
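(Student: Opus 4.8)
The plan is to reduce the theorem to Lemma 3.5, the reduction being effected by the Dehn surgery theorems of Gabai and Gordon--Luecke, and then to finish with a short sequence of handle slides.

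\emph{Setup.} Let $H_\alpha\cup H_\beta\cup H_\gamma$ be a $(g,g-1,k,0)$-trisection tripod, defining the $4$-manifold $X$. As observed in the introduction, $X$ is simply connected, so $\chi(X)\ge 2$; since $\chi(X)=2+g-(g-1)-k=3-k$, this forces $k\in\{0,1\}$. Because $(H_\alpha,H_\beta,\Sigma)$ is a genus $g$ Heegaard splitting of $\#^{g-1}S^1\times S^2$, Theorem 1.13 lets me fix a $g,(g-1)$-standard diagram $(\alpha,\beta)$, so that $\alpha_i=\beta_i$ for $1\le i\le g-1$ and $|\alpha_g\cap\beta_g|=1$. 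Set $M=H_\gamma(\beta_1\cup\dots\cup\beta_{g-1})=H_\gamma(\alpha_1\cup\dots\cup\alpha_{g-1})$. The goal is to verify the hypotheses of Lemma 3.5 with $k_1=g-1$, namely that $M$ is a handlebody and that $\beta_g$ is dualized in $M$, and then to conclude.

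\emph{Identifying $M$.} Since $\beta_1,\dots,\beta_{g-1}$ are independent in $H_1(\Sigma)$, $\partial M$ is a torus. Attaching a $2$-handle to $M$ along $\beta_g$ and capping the resulting sphere with a $3$-handle is the same as Dehn filling $M$ along the slope $\beta_g$; the outcome is the $3$-manifold determined by the genus $g$ Heegaard diagram $(\beta,\gamma)$, which is $\#^0S^1\times S^2=S^3$. Hence $M$ embeds in $S^3$ as the exterior of a knot $K$, and $\beta_g$ is the meridian of $K$. Likewise, because $\alpha_i=\beta_i$ for $i\le g-1$, Dehn filling $M$ along $\alpha_g$ yields the $3$-manifold determined by $(\alpha,\gamma)$, namely $\#^kS^1\times S^2$; and $\alpha_g\ne\beta_g$ as slopes on $\partial M$, since $|\alpha_g\cap\beta_g|=1$. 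If $k=0$, then $K$ has a surgery other than the trivial one yielding $S^3$, so $K$ is unknotted by Gordon--Luecke \cite{Gordon-Luecke}; if $k=1$, then a surgery on $K$ yields $S^1\times S^2$, so $K$ is unknotted by Gabai \cite{Gabai}. Either way $M\cong S^1\times D^2$. A solid torus admits an $S^3$ Dehn filling only along a slope meeting the boundary of a meridian disk exactly once, so $\beta_g$ is dualized in $M$ by such a disk.

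\emph{Finishing.} By Lemma 3.5 the tripod now has a trisection diagram $(\alpha,\beta,\gamma)$ in which $(\alpha,\beta)$ and $(\beta,\gamma)$ are standard: explicitly, $\alpha_i=\beta_i$ for $i\le g-1$, $|\alpha_g\cap\beta_g|=1$, and $|\beta_i\cap\gamma_j|=\delta_{ij}$ for all $i,j$, so in particular $|\alpha_i\cap\gamma_j|=\delta_{ij}$ for $i\le g-1$. To make $(\alpha,\gamma)$ standard as well, observe that for each $i\le g-1$ we have $|\alpha_i\cap\gamma_i|=1$ with $\alpha_i$, $\gamma_i$ bounding disks in $H_\alpha$, $H_\gamma$ respectively, so the splitting $(H_\alpha,H_\gamma,\Sigma)$ of $\#^kS^1\times S^2$ destabilizes $g-1$ times along these pairs to the genus $1$ splitting of $\#^kS^1\times S^2$, which by Theorem 1.13 is unique. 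Realizing this as a sequence of slides of $\alpha_g$ over $\alpha_1,\dots,\alpha_{g-1}$ together with an ambient isotopy, and routing every slide arc so as to miss $\beta_g$ (possible since $\beta_g$ is disjoint from $\alpha_1,\dots,\alpha_{g-1}$ and $\Sigma\setminus\beta_g$ is connected), I reach a standard form without disturbing $(\alpha,\beta)$ or $(\beta,\gamma)$. Thus $(\alpha,\beta,\gamma)$ is a standard trisection diagram, so by Proposition 1.6 the trisection is the standard one: $\St^{g-1,1,0}$ when $k=1$, and $\mathcal{T}\#\St^{g-1,0,0}$, with $\mathcal{T}$ the standard genus $1$ trisection of $\mathbb{CP}^2$ (up to orientation), when $k=0$.

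\emph{The main obstacle.} The crux is the middle step. A priori $M=H_\gamma(\beta_1\cup\dots\cup\beta_{g-1})$ is the exterior of an arbitrary knot in $S^3$, and no feature internal to the diagram $(\alpha,\beta)$ constrains it; it is precisely the two distinct Dehn fillings of $M$ coming from $\beta_g$ and $\alpha_g$, fed into the surgery theorems of Gordon--Luecke and Gabai, that force $M$ to be a solid torus and so open the door to Lemma 3.5. The remaining care is confined to the last step, in ensuring that each handle slide is routed to preserve the standard structure already obtained --- the same kind of bookkeeping carried out in the proof of Lemma 3.5.
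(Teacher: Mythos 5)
Your proof is correct and follows essentially the same route as the paper's: the Euler characteristic forces $k\in\{0,1\}$, Waldhausen's theorem gives the $g,(g-1)$-standard diagram $(\alpha,\beta)$, Gabai and Gordon--Luecke identify $M=H_\gamma(\beta_1\cup\cdots\cup\beta_{g-1})$ as a solid torus so that Lemma 3.5 applies, and a final round of slides of $\alpha_g$ over $\alpha_1,\dots,\alpha_{g-1}$ standardizes $(\alpha,\gamma)$. If anything you are slightly more explicit than the paper in checking the dualization hypothesis of Lemma 3.5 and in treating the two cases $k=0,1$ in parallel rather than deferring $k=0$ as ``nearly identical.''
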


\begin{proof}

For the elementary homological reasons mentioned in Section 1, we must have $k=0,1$.  We consider the case $k=1$ first.

Let $H_\alpha\cup H_\beta\cup H_\gamma$ be the trisection tripod of a $(g,g-1,1,0)$-trisection with central surface $\Sigma$, and let $(\alpha,\beta)$ be a standard diagram of $(H_\alpha,H_\beta,\Sigma)$, indexed so that $\alpha_i=\beta_i$ for $1\leq i\leq g-1$.

Since $(H_\beta,H_\gamma,\Sigma)$ is a Heegaard splitting of $S^3$, $H_\gamma(\beta_1\cup\cdots \cup \beta_{g-1})$ is a knot complement $E(K)$ in $S^3$.  But $\alpha_1\cup\cdots \cup \alpha_{g-1}=\beta_1\cup\cdots \cup \beta_{g-1}$, so we also have $E(K)=H_\gamma(\alpha_1\cup\cdots \cup \alpha_{g-1})$, and $\alpha_g$ lies on the boundary torus $\partial E(K)$.  Since $(H_\alpha,H_\gamma,\Sigma)$ is a Heegaard splitting of $S^1\times S^2$, $\alpha_g$ must be the slope of a Dehn surgery on $K\subset S^3$ which yields $S^1\times S^2$.  By Gabai's theorem \cite{Gabai}, $K$ is the unknot and $\alpha_g$ bounds a disk in the solid torus $E(K)$.  Lemma 3.6 now tells us that we may perform slides so that $(\alpha,\beta)$ and $(\beta,\gamma)$ are standard.

It remains to show that $\alpha$ can be slid so that it is standard with respect to $\gamma$, while remaining standard with respect to $\beta$.  But $\alpha_i=\beta_i$ for all $1\leq i\leq g-1$, hence $|\alpha_i\cap \gamma_j|=\delta_{ij}$ for all $1\leq i\leq g-1$ and $1\leq j\leq g$.  Moreover, $\alpha_g$ is isotopic to $\gamma_g$ in $\partial H_\gamma(\alpha_1\cup\cdots \cup \alpha_{g-1})$, which means that $\alpha_g$ can be slid over the curves $\alpha_1\cup\cdots \cup \alpha_{g-1}$ in $\Sigma$ until it is isotopic to $\gamma_g$.  Since none of these slides effect $\alpha\cap \beta$, we are done.

The case $k=0$ is nearly identical.  The main difference is that we deduce that $E(K)$ is a solid torus using the Gordon-Luecke theorem \cite{Gordon-Luecke}.

\end{proof}

It seems unlikely that Theorem 3.6 can be extended to all $(g,k_1,k_2,0)$-trisections with $k_2\leq k_1<g-1$.  Consider a $k$-component link $L\subset S^3$ with the following properties:

\begin{itemize}

\item $L$ admits a $\#^kS^1\times S^2$ Dehn surgery.

\item The tunnel number $t(L)$ of $L$ satisfies $t(L)\geq k$.

\end{itemize}

The tunnel number of $L$ is the minimum number of arcs $t_1\cup\cdots \cup t_n$ that need to be properly embedded in $E(L)$ so that $E(L\cup t_1\cup\cdots \cup t_g)$ becomes a handlebody.  Such links exist, for example, \cite{Gompf-Scharlemann-Thompson} describes many of them in the case $k=2$.

Suppose that $L=L_1\cup L_2$ is such a link, where each component $L_i$ is connected.  Let $t_1\cup\cdots \cup t_n\subset E(L)$ be a system of unknotting tunnels for $L$ with $n=t(L)\geq 2$.  After some edge slides, we can assume that only one arc, say $t_n$, connects $\partial N(L_1)$ to $\partial N(L_2)$.  Let $\alpha_i$, $i=1,2$, be the pair of surgery slopes on $\partial N(L)$ which give $\#^2S^1\times S^2$, let $\beta_i$ be the meridian of $L_i$ on $\partial N(L_i)$, $i=1,2$.  Finally, let $\mu_i$ be the meridian curve of $t_i$ on $\partial E(L\cup t_1\cup\cdots\cup t_n)$, $1\leq i\leq n-1$.

With this information, we construct a $(n+1, n-1,2,0)$-trisection tripod $H_\alpha\cup H_\beta\cup H_\gamma$ with central surface $\Sigma=\partial E(L\cup t_1\cup\cdots \cup t_n)$ by setting:

\begin{itemize}

\item $H_\alpha=$ the handlebody defined by $\alpha_1\cup\alpha_2\cup \mu_1\cup\cdots \cup \mu_{n-1}\subset \Sigma$.

\item $H_\beta=$ the handlebody defined by $\beta_1\cup\beta_2\cup\mu_1\cup\cdots \cup \mu_{n-1}\subset \Sigma$.

\item $H_\gamma=E(L\cup t_1\cup\cdots \cup t_n)$.

\end{itemize}

It seems unlikely that these trisections are really just $\St^{n-1,2,0}$ in disguise.  The method of proof that underlies Theorem 3.6 breaks down badly because the result of attaching $2$-handles along the double curves $\mu_1\cup\cdots \cup \mu_{n-1}$ of the diagram $(\alpha,\beta)$ to $H_\gamma$ yields $E(L\cup t_n)$, which is not even a handlebody.

\section{Acknowledgements}

I would like to thank Dave Gay for helpful conversations and Marty Scharlemann for making me aware of \cite{Scharlemann-Thompson}.  I would also like to thank Charles Frohman for sitting down with me in Iowa City to look at Gay and Kirby's paper for the first time.

\end{document}